\documentclass[11pt]{amsart}
\usepackage{amssymb, latexsym, amsmath, amsfonts, tikz}
\usepackage{hyperref, enumitem, fancyhdr}
\usepackage{color}

\newtheorem{thm}{Theorem}[section]
\newtheorem{cor}[thm]{Corollary}
\newtheorem{lem}[thm]{Lemma}
\newtheorem{prop}[thm]{Proposition}
\theoremstyle{definition}

\theoremstyle{remark}
\newtheorem{rem}[thm]{Remark}
\numberwithin{equation}{section}
\theoremstyle{remark}



\setlength{\oddsidemargin}{0in} \setlength{\evensidemargin}{0in}
\setlength{\textwidth}{6.3in} \setlength{\topmargin}{-0.2in}
\setlength{\textheight}{9in}



\newcommand{\mbb}{\mathbb}
\newcommand{\ra}{\rightarrow}

\newcommand{\pa}{\partial}
\newcommand{\ov}{\overline}
\newcommand{\sm}{\setminus}

\newcommand{\no}{\noindent}
\newcommand{\al}{\alpha}

\newcommand{\cal}{\mathcal}

\newcommand{\la}{\lambda}

\newcommand{\La}{\Lambda}

\newcommand{\si}{\sigma}

\begin{document}
\title{Ergodic properties of families of H\'{e}non maps}
\author{Ratna Pal, Kaushal Verma}

\address{Ratna Pal: Department of Mathematics, Indian Institute of Science, Bangalore 560 012, India}
\address{
Current Address: Department of Mathematics, Indian Institute of Science Education and Research, Pune, Maharashtra-411008, India}
	\email{ratna@iiserpune.ac.in}

\address{Kaushal Verma: Department of Mathematics, Indian Institute of Science, Bangalore 560 012, India}
\email{kverma@math.iisc.ernet.in}

\begin{abstract}
Let $\{ H_{\lambda} \}$ be a continuous family of H\'{e}non maps parametrized by $\lambda \ in M$, where $M \subset \mathbb C^k$ is compact. The purpose of this paper is to understand some aspects of the random dynamical system obtained by iterating maps from this family. As an application, we study skew products of H\'{e}non maps and obtain lower bounds for their entropy.
\end{abstract}

\maketitle


\section{Introduction}

\no The purpose of this paper is to study the random dynamical system obtained by iterating H\'{e}non maps that lie in a given compact family of such maps. To make this precise, let $\cal H = \{H_{\la}\}$ be a continuous family of H\'{e}non maps parametrized by $\la \in M$, where $M \subset \mbb C^k$ is compact. Thus, for each $\la \in M$ and $(x, y) \in \mbb C^2$, the map
\[
H_{\la} = H_{\la}(x, y) = H_{\la}^{(m)} \circ H_{\la}^{(m-1)} \circ \cdots \circ H_{\la}^1(x, y)
\]
where for each $1 \le j \le m$, $H_{\la}^{(j)}$ is a generalized H\'{e}non map defined by
\begin{equation}
H_{\la}^{(j)}(x, y) = (y, p_{j, \la}(y) - a_j(\la) x)
\end{equation}
with $p_{j, \la}(y)$ a monic polynomial of degree $d_j \ge 2$ whose coefficients and $a_j(\la)$ are continuous functions on $M$. The degree of $H_{\la}$ is $d = d_1 d_2 \cdots d_m$ which does not vary with $\la \in M$. Let $m$ be normalized Lebesgue measure on $M$. Kolmogorov's extension theorem guarantees the existence of a probability measure, say $\ov m$ on the cartesian product of countably many copies of $M$, i.e.,
\[
\ov m = m \times m \times \ldots
\]
is a probability measure on $X = M \times M \times \ldots$, where $X$ is endowed with the product topology. For each $\La = (\la_1, \la_2, \ldots) \in X$ and $\la \in M$, let
\[
\la \La = (\la, \la_1, \la_2, \ldots) \;\; \text{and} \;\; \La_n = (\la_n, \la_{n+1}, \ldots)
\]
for all $n \ge 1$. Furthermore, let
\begin{equation}
H_{n, \La}^{\pm} = H_{\la_n}^{\pm 1} \circ H_{\la_{n-1}}^{\pm 1} \circ \cdots \circ H_{\la_1}^{\pm 1}
\end{equation}
for each $\La \in X$ and $n \geq 1$. There were two reasons for considering a set-up as general as this. First, the case when $M$ reduces to a point corresponds precisely to the situation when a given single H\'{e}non map is iterated. Much is known about this case and motivated by this, it seemed interesting to understand the `averaged-out' dynamics of the family $\{H_{n, \La}^{\pm}\}$, not for a fixed $\La \in X$, but by allowing $\La$ to vary in $X$. Second, the work of Forn{\ae}ss-Weickert \cite{FWe} must be mentioned here -- they work with a holomorphic family of holomorphic endomorphisms of $\mbb P^k$ and develop analogs of several dynamically interesting objects associated with a single holomorphic endomorphism of $\mbb P^k$. H\'{e}non maps have a point of indeterminacy when viewed as maps from $\mbb P^2$, but they are regular in the sense of Sibony. Indeed, if $[t: x: y]$ are the homogeneous coordinates on $\mbb P^2$, a given H\'{e}non map $H$ defined by
\[
H(x, y) = (y, p(y) - \delta x)
\]
where $p(y)$ is a monic polynomial of degree $l \ge 2$ and $\delta \not=0$, extends to a birational map of $\mbb P^2$ as
\[
[t: x: y] \mapsto [t^l: yt^{l-1} : t^l p(yt^{-1}) - \delta x t^{l-1}],
\]
and this extension will still be denoted by $H$. Note that each of these maps (as $p = p(y), \delta$ vary) have a common attracting fixed point at $I^- = [0:0:1]$, which is the unique indeterminacy point of $H^{-1}$ and a common repelling fixed point at $I^+ = [0:1:0]$ which is the unique indeterminacy point of $H$ in $\mbb P^2$. Further, the backward orbit of $I^+$ and the forward orbit of $I^-$ under a given sequence $(H_n)_{n \ge 1}$ of such H\'{e}non maps are uniformly separated, i.e., $I^+_{\infty} \cap I^-_{\infty} = \emptyset$, where
\[
I^+_{\infty} = \bigcup_{n=1}^{\infty} (H_n^{-1} \circ \cdots \circ H_1^{-1})(I^+)
\]
and
\[
I^-_{\infty} = \bigcup_{n=1}^{\infty} (H_n \circ \cdots \circ H_1)(I^-)
\]
since $I^+_{\infty} = I^+$ and $I^-_{\infty} = I^-$, as can be checked. It is because of this uniform separateness that it is possible to develop analogous results in the random case. With these observations in mind, working with families of H\'{e}non maps sounded like a reasonable choice in an attempt to see how far one could go in obtaining analogs of the results in \cite{FWe}.

\medskip

To describe some of the results in this direction, for each $\La = (\la_1, \la_2, \ldots) \in X$ and $n \ge 1$, let
\[
G_{n, \La}^{\pm}(z) = \frac{1}{d^n} \log^+ \Vert H^{\pm}_{n, \La}(z) \Vert
\]
where, as usual, $\log^+t = \max\{ \log t, 0\}$. For each $\La \in X$, the sets $I^{\pm}_{\La}$, $K^{\pm}_{\La}$ of escaping and non-escaping points respectively, are defined as
\[
I^{\pm}_{\Lambda} = \left\{ z = (x, y) \in \mbb C^2 : \Vert H_{n, \La}^{\pm}(z) \Vert \ra \infty \; \text{as} \; n \ra \infty \right\}
\]
and
\[
K^{\pm}_{\La} = \left\{ z = (x, y) \in \mbb C^2 : \text{the orbit} \; \big( H_{n, \La}^{\pm}(z)\big)_{n \ge 0} \; \text{is bounded in} \; \mbb C^2  \right\},
\]
and let $J^{\pm}_{\La} = \pa K^{\pm}_{\La}$.
\begin{prop}\label{P1}
For each $\La \in X$, the sequence $\{ G_{n, \La}^{\pm} \}$ converges locally uniformly on $\mbb C^2$ to a non-negative, locally H\"{o}lder continuous plurisubharmonic function $G_{\La}^{\pm}$ as $n \ra \infty$. The limit functions $G_{\La}^{\pm}$ satisfy the following invariance property: For $\la \in M$,
\[
G_{\La}^{\pm} \circ H^{\pm 1}_{\la} = d \; G_{\La'}^{\pm}
\]
where $\La' = \la \La$. The functions $G_{\La}^{\pm}$ are positive and pluriharmonic on $I^{\pm}_{\La} = \mbb C^2 \sm K^{\pm}_{\La}$, and vanish precisely on $K^{\pm}_{\La}$. Furthermore, the family $\{ G_{\La}^{\pm} \}$ is uniformly of logarithmic growth, i.e., there exists a $C > 0$ such that
\[
0 \le G_{\La}^{\pm}(z) \le \log^+ \vert z \vert + C
\]
for all $\La \in X$ and $z \in \mbb C^2$,  Finally, the correspondence $\La \mapsto G^{\pm}_{\La}$ is continuous.
\end{prop}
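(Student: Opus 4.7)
My strategy is to construct a uniform filtration on the compact family $\cal H$ and then adapt the classical Bedford--Smillie convergence argument for a single H\'enon map, ensuring that all constants remain uniform in $\La$ throughout.

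First, using compactness of $M$ together with continuity of the coefficients of $p_{j,\la}$ and $a_j(\la)$, I would produce a single $R > 0$ for which the standard filtration sets
\[
V^+_R = \{(x,y) : |y| > \max(|x|, R)\}, \; V^-_R = \{(x,y) : |x| > \max(|y|, R)\}, \; V_R = \{\max(|x|,|y|) \le R\},
\]
together with positive constants $c_1, c_2$, satisfy the following uniformly for every $\la \in M$: (i) $H_\la(V^+_R) \subset V^+_R$ with $c_1 \|z\|^d \le \|H_\la(z)\| \le c_2 \|z\|^d$ on $V^+_R$; (ii) the analogous statements for $H_\la^{-1}$ on $V^-_R$; and (iii) every forward orbit either remains trapped in $V_R$ (hence lies in $K^+_\La$) or enters $V^+_R$ after a finite number of steps that is bounded on compact subsets. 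This is the place where compactness of $M$ enters essentially; once these uniform statements are in hand, subsequent estimates will inherit uniformity in $\la$ automatically.

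Given the uniform filtration, for $z \in V^+_R$ one has $H^+_{n,\La}(z) \in V^+_R$ for every $n$, and (i) gives $\log\|H^+_{n+1,\La}(z)\| = d \log\|H^+_{n,\La}(z)\| + O(1)$ with the $O(1)$ uniform in both $\La$ and $z$. Dividing by $d^{n+1}$ and summing a geometric series will show that $\{G^+_{n,\La}\}$ is uniformly Cauchy on compact subsets of $V^+_R$, at the geometric rate $|G^+_{n,\La}(z) - G^+_\La(z)| \le C_0 d^{-n}$. For $z \notin V^+_R$, I will use (iii) together with the functional relation $d \cdot G^+_{n,\la\La}(z) = G^+_{n-1,\La}(H_\la(z))$ either to push the base point into $V^+_R$ in finitely many steps (bounded on compacta) or to reduce to the case $z \in K^+_\La$, where $H^+_{n,\La}(z)$ stays bounded by $R$ and so $G^+_{n,\La}(z) \to 0$. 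This will give locally uniform convergence on all of $\mbb C^2$, and the limit $G^+_\La$ is plurisubharmonic as a locally uniform limit of such functions. The invariance $G^+_\La \circ H_\la = d\, G^+_{\la\La}$ is the displayed functional relation passed to the limit; the limit vanishes precisely on $K^+_\La$ because of the lower bound in (i) iterated $n$ times; and pluriharmonicity on $I^+_\La$ follows because on any compact $K \subset I^+_\La$ one has $H^+_{n,\La}(K) \subset V^+_R$ for $n$ large, on which range $\log^+\|H^+_{n,\La}(z)\| = \log|\pi_y(H^+_{n,\La}(z))|$ is pluriharmonic in $z$, and this passes to the locally uniform limit. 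The uniform logarithmic-growth estimate $G^+_\La(z) \le \log^+|z| + C$ comes from iterating the global bound $\|H_\la(z)\| \le C'(1+\|z\|)^d$ (with $C'$ uniform on $M$) and dividing by $d^n$.

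Finally, for continuity of $\La \mapsto G^+_\La$, the uniform Cauchy estimate combined with the joint continuity of each $G^+_{n,\La}(z)$ in $(z,\La)$ (inherited from continuity of the coefficients) gives joint continuity of $(z,\La) \mapsto G^+_\La(z)$, equivalent to continuity of $\La \mapsto G^+_\La$ in the compact-open topology. The local H\"older continuity is the step I expect to be the main obstacle. Following Sibony's strategy, I will combine the geometric Cauchy estimate above with a uniform exponential Lipschitz bound $\mathrm{Lip}(H^+_{n,\La}|_K) \le L^n$ on compacta (with $L$ uniform on $M$), and for nearby points $z, w$ in a fixed compact set choose $n$ as a function of $\|z-w\|$ to balance the two errors in the standard split $|G^+_\La(z) - G^+_\La(w)| \le 2C_0 d^{-n} + |G^+_{n,\La}(z) - G^+_{n,\La}(w)|$. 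This should yield a H\"older estimate for $G^+_\La$ with some exponent $\al > 0$ and constants depending only on the compact set and on the uniform constants from the filtration step, hence independent of $\La$. All corresponding statements for $G^-_\La$ follow by applying the same arguments to $H^{-1}_\la$ and $V^-_R$.
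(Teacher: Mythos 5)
Your overall strategy — build a uniform filtration over the compact family and then run the Bedford--Smillie / Sibony convergence and H\"older arguments with constants uniform in $\La$ — is essentially the same as the paper's. The paper likewise chooses $R$ with $H_\la(V_R\cup V_R^+)\subset V_R\cup V_R^+$ for all $\la$, bounds the defect $v_\la(z)=\tfrac1d\log^+\lVert H_\la(z)\rVert-\log^+\lVert z\rVert$ uniformly on $V_R\cup V_R^+$, telescopes to get the geometric Cauchy rate, and then splits the series for $G^+_\La$ at an index $N$ chosen to balance a Lipschitz term against the tail. However, two steps in your write-up do not survive scrutiny as stated.

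First, your item (iii) — that on any compact $K$ the escape time into $V_R^+$ is bounded — is false. Points of $K$ arbitrarily close to $J^+_\La=\partial K^+_\La$ escape arbitrarily slowly, and $J^+_\La$ does meet compact sets, so no such bound exists, and the dichotomy argument you build on it leaves the rate of convergence uncontrolled precisely near $J^+_\La$. The correct observation, which the paper uses and which renders the escape-time argument unnecessary, is that your recursion $\log^+\lVert H_\la(z)\rVert = d\log^+\lVert z\rVert + O(1)$ holds uniformly not just on $V_R^+$ but on all of the \emph{forward-invariant} set $V_R\cup V_R^+$: on $V_R$ the set $M\times V_R$ is compact and $\log^+\lVert z\rVert$, $\log^+\lVert H_\la(z)\rVert$ are both bounded there, and on $V_R^+$ you already have it. Since any compact $K\subset\mathbb C^2$ can be absorbed into $V_R\cup V_R^+$ by enlarging $R$, and orbits never leave $V_R\cup V_R^+$, the telescoping estimate $\lvert G^+_{n+1,\La}(z)-G^+_{n,\La}(z)\rvert\le C d^{-n}$ holds uniformly on $K$ for all $n$ with no case split at all.

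Second, your H\"older argument uses a bound $\mathrm{Lip}\bigl(H^+_{n,\La}\vert_K\bigr)\le L^n$ in the Euclidean metric. This is also false: $\lVert DH_\la(z)\rVert$ grows like $\lVert z\rVert^{d-1}$, and since $\lVert H^+_{k,\La}(z)\rVert$ itself grows doubly exponentially (like $c^{d^k}$) for escaping $z$, the Euclidean Lipschitz constant of $H^+_{n,\La}$ on a compact set that meets $I^+_\La$ grows doubly exponentially in $n$, not geometrically. The standard remedy, and the one the paper uses, is to carry out this part of the argument in the Fubini--Study metric on $\mathbb P^2$ restricted to a fixed compact $Y=\mathbb P^2\setminus\ov U$ with $U$ a neighborhood of $I^-$ (chosen so that all orbits of interest stay in $Y$): there $H_\la$ and $v_\la$ are genuinely Lipschitz with constants $L$ and $A$ uniform in $\la$, and the split $\lvert G^+_\La(a)-G^+_\La(b)\rvert \le A\,d_{FS}(a,b)\sum_{n<N}(L/d)^n + K'd^{-N}$ balances correctly to give a H\"older exponent $\min\{1,\log d/\log L\}$ independent of $\La$. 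Since the FS and Euclidean metrics are comparable on compact subsets of $\mathbb C^2$, this yields the claimed local H\"older continuity of $G^\pm_\La$ in $\mathbb C^2$. With these two corrections — working throughout on $V_R\cup V_R^+$ and switching to the FS metric for the Lipschitz input — the rest of your plan (invariance, vanishing precisely on $K^\pm_\La$, pluriharmonicity outside $K^\pm_\La$, logarithmic growth, continuity of $\La\mapsto G^\pm_\La$) goes through exactly as you outline and matches the paper's proof.
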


\no Let
\[
EG^{\pm}(z) = \int_M G_{\La}^{\pm}(z) \; d \ov m(\La)
\]
be the average Green functions for the family $\cal H$. Since $G^{\pm}_{\La}$ depends continuously on $\La$, the integral is well defined. The fact that the family $\{ G_{\La}^{\pm} \}$ is uniformly of logarithmic growth implies that
\[
0 \le EG^{\pm}(z) \le \log^+ \vert z \vert + O(1)
\]
near infinity.

\begin{prop}\label{P2}
$EG^{\pm}$ are locally H\"{o}lder continuous plurisubharmonic functions on $\mathbb C^2$. Moreover, $EG^{\pm}$ are strictly positive and pluriharmonic outside
\[
\cal K^{\pm} = \ov{\bigcup_{\La} K^{\pm}_{\La}}
\]
and vanish precisely on
\[
\cal K_0^{\pm} = \bigcap_{\La}K^{\pm}_{\La}.
\]
\end{prop}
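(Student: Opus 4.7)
The plan is to transfer each property of $G_\La^\pm$ furnished by Proposition \ref{P1} to the average $EG^\pm$, exploiting three ingredients: continuity of $\La \mapsto G_\La^\pm$, compactness of the parameter space $X = M \times M \times \cdots$ by Tychonoff's theorem, and Fubini's theorem, which is legitimate thanks to the uniform logarithmic bound $0 \le G_\La^\pm(z) \le \log^+ \vert z \vert + C$.

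Plurisubharmonicity of $EG^\pm$ on $\mbb C^2$ is the standard fact that a dominated integral of a continuous family of plurisubharmonic functions is plurisubharmonic. For local H\"older continuity, I would revisit the proof of Proposition \ref{P1} and observe that on each compact subset $K$ of $\mbb C^2$, the H\"older exponent and constant for $G_\La^\pm$ can be chosen independently of $\La$: the telescoping estimates that yield H\"older continuity for a single H\'enon map depend only on bounds on the coefficients of $p_{j, \la}$ and on $a_j(\la)$, which are continuous, hence bounded, on the compact set $M$. A uniform estimate
\[
\vert G_\La^\pm(z) - G_\La^\pm(w) \vert \le C_K \vert z - w \vert^{\al}
\]
then transfers verbatim to $EG^\pm$ upon integration in $\La$.

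For the vanishing set: if $z \in \cal K_0^\pm = \bigcap_\La K_\La^\pm$ then $G_\La^\pm(z) = 0$ for every $\La$, hence $EG^\pm(z) = 0$. Conversely, if $EG^\pm(z) = 0$, then the nonnegative continuous integrand $\La \mapsto G_\La^\pm(z)$ must vanish on the support of $\ov m$; since $m$ has full support on $M$, that support is all of $X$, forcing $G_\La^\pm(z) = 0$ for every $\La$ and placing $z$ in $\cal K_0^\pm$. For strict positivity outside $\cal K^\pm$: if $z \notin \cal K^\pm$ then $z \notin K_\La^\pm$ for any $\La \in X$, so $G_\La^\pm(z) > 0$ for every $\La$; by compactness of $X$ and continuity of $\La \mapsto G_\La^\pm(z)$, this positive continuous function attains a minimum $c(z) > 0$, giving $EG^\pm(z) \ge c(z) > 0$.

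It remains to show $EG^\pm$ is pluriharmonic on $\mbb C^2 \sm \cal K^\pm$. On this open set each $G_\La^\pm$ is pluriharmonic, since $\mbb C^2 \sm \cal K^\pm \subset \mbb C^2 \sm K_\La^\pm$. Fixing $z$ in this open set, a complex line $L$ through $z$ with unit tangent $v$, and a sufficiently small $r > 0$, the harmonic mean value property for $G_\La^\pm \vert_L$ gives
\[
G_\La^\pm(z) = \frac{1}{2\pi} \int_0^{2\pi} G_\La^\pm(z + r e^{i\theta} v) \, d\theta
\]
for each $\La$. Averaging over $\La$ and interchanging integrals via Fubini transfers the mean value property to $EG^\pm$ along every complex line, which combined with the already established plurisubharmonicity yields pluriharmonicity on $\mbb C^2 \sm \cal K^\pm$. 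The main technical obstacle is isolating the uniform-in-$\La$ H\"older constant within the proof of Proposition \ref{P1}; once that uniformity is secured, every remaining assertion reduces to routine applications of continuity, Tychonoff compactness, and Fubini.
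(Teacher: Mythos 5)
Your proposal follows essentially the same route as the paper: plurisubharmonicity via the sub-mean-value inequality plus Fubini (legitimate by joint continuity of $(\La, z) \mapsto G_\La^\pm(z)$ and the uniform logarithmic bound), local H\"older continuity by noting that the H\"older exponent and constant produced in the proof of Proposition \ref{P1} are uniform in $\La$ on compact sets, and the vanishing-set characterization by combining nonnegativity, continuity in $\La$, and full support of $\ov m$. Where you add value is in the pluriharmonicity claim, which the paper dismisses with ``it is not difficult to observe'': your mean-value-equality argument (or equivalently, averaging the psh property of both $\pm G_\La^\pm$) is the correct way to fill that in. Your compactness-of-$X$ argument for strict positivity is correct but more than needed, since strict positivity outside $\cal K^\pm$ already follows from the vanishing characterization because $\cal K_0^\pm \subset \cal K^\pm$; and in any case positivity of a continuous positive integrand against a probability measure requires no compactness. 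One caveat you are right to flag, and which the paper leaves implicit: the converse direction of the vanishing claim needs $\ov m$ to have full support on $X$, which in turn requires the normalized Lebesgue measure on $M$ to have full support on $M$ — not automatic for an arbitrary compact $M \subset \mbb C^k$, and presumably a standing assumption.
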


\no In view of these observations, we may define the random stable and unstable Green currents as
\[
\mu^{\pm}_{\La} = \frac{1}{2\pi} dd^cG^{\pm}_{\La}
\]
for all $\La \in X$ and let
\[
\mu^{\pm} = \frac{1}{2\pi} dd^c(EG^{\pm})
\]
be the average stable and unstable Green currents respectively. These are well-defined positive closed $(1,1)$-currents of mass $1$ on $\mbb C^2$. Since the correspondence $\La \mapsto \mu^{\pm}_{\La}$ is continuous and $\mu^{\pm}_{\La}$ has mass $1$ for each $\La$, $\int_{X} \mu^{\pm}_{\La}$ defines a positive $(1,1)$-current on $\mbb C^2$ in the following way: for a test form $\phi$ on $\mbb C^2$,
\[
\left \langle \int_X \mu^{\pm}_{\La}, \phi  \right \rangle = \int_X \langle \mu^{\pm}_{\La}, \phi \rangle \; d \ov m(\La).
\]
It is useful to know how these objects behave when pushed forward or pulled back by an arbitrary H\'{e}non map from $\cal H$. The next proposition records this and also shows that the currents associated with the average Green functions agree with the corresponding average Green currents.

\begin{prop}\label{P3}
With $\mu^{\pm}_{\La}$ and $\mu^{\pm}$ as above,
\[
\mu^{\pm} = \int_X \mu^{\pm}_{\La}.
\]
Furthermore,
\[
(H_{\la}^{\pm 1})^{\ast} \mu_{\La}^{\pm} = d^{\pm 1} \mu_{\la\La}^{\pm}
\]
for any $\la \in M$ and $\La \in X$. Also,
\[
\int_M EG^{\pm} \circ H_{\la}^{\pm 1}(z) \; d m(\la) = d \; EG^{\pm}(z)
\]
and
\[
\int_M (H_{\la}^{\pm 1})^{\ast} \mu^{\pm} \; d m(\la) = d \; \mu^{\pm}.
\]
The support of $\mu_{\La}^{\pm}$ is $J_{\La}^{\pm}$ and the correspondences $\La \mapsto J_{\La}^{\pm}$ are lower semi-continuous. Finally,
\[
\text{supp}(\mu^{\pm}) = \ov{\bigcup_{\La \in X} J_{\La}^{\pm}}.
\]
\end{prop}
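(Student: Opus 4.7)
The plan is to deduce each assertion by combining Propositions \ref{P1} and \ref{P2} with Fubini's theorem and the continuity of $dd^c$ along continuous families of plurisubharmonic functions.

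First, for $\mu^{\pm} = \int_X \mu^{\pm}_{\La}$, I would test both sides against a smooth compactly supported $(1,1)$-form $\phi$. Writing $EG^{\pm}(z) = \int_X G^{\pm}_{\La}(z)\, d\ov m(\La)$ and swapping the order of integration (justified by continuity of $\La \mapsto G^{\pm}_{\La}$ together with the uniform logarithmic growth bound of Proposition \ref{P1}),
\[
\langle dd^c EG^{\pm}, \phi \rangle = \langle EG^{\pm}, dd^c\phi \rangle = \int_X \langle G^{\pm}_{\La}, dd^c\phi \rangle \, d\ov m(\La) = \int_X \langle dd^c G^{\pm}_{\La}, \phi \rangle \, d\ov m(\La),
\]
which gives the claimed identity after dividing by $2\pi$. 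For the pullback formula, I would apply $dd^c$ to both sides of the invariance $G^{\pm}_{\La} \circ H^{\pm 1}_{\la} = d\, G^{\pm}_{\la\La}$ from Proposition \ref{P1}: since $H^{\pm 1}_{\la}$ is holomorphic, $dd^c$ commutes with pullback, which yields the transformation rule for $\mu^{\pm}_{\La}$. The identities asserting $\int_M EG^{\pm} \circ H^{\pm 1}_{\la}\, dm(\la) = d\, EG^{\pm}$ and $\int_M (H^{\pm 1}_{\la})^{\ast} \mu^{\pm}\, dm(\la) = d\, \mu^{\pm}$ then follow by combining these facts with Fubini and the observation that the map $(\la, \La) \mapsto \la\La$ pushes $m \times \ov m$ forward to $\ov m$.

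To identify $\text{supp}(\mu^{\pm}_{\La}) = J^{\pm}_{\La}$, I note that $G^{\pm}_{\La}$ is pluriharmonic on $I^{\pm}_{\La}$ and vanishes on the interior of $K^{\pm}_{\La}$, so $\text{supp}(\mu^{\pm}_{\La}) \subseteq \pa K^{\pm}_{\La} = J^{\pm}_{\La}$. For the reverse containment, if a small ball $U$ about some $p \in J^{\pm}_{\La}$ carried no mass of $\mu^{\pm}_{\La}$, then $G^{\pm}_{\La}$ would be pluriharmonic on $U$, attain its minimum value $0$ at some point of $U \cap K^{\pm}_{\La}$, and hence vanish on the connected component containing that minimum, contradicting $U \cap I^{\pm}_{\La} \neq \emptyset$. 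For the lower semi-continuity of $\La \mapsto J^{\pm}_{\La}$, the continuity of $\La \mapsto G^{\pm}_{\La}$ from Proposition \ref{P1} implies weak-$\ast$ continuity of $\La \mapsto \mu^{\pm}_{\La}$: given an open $V$ meeting $J^{\pm}_{\La_0}$, I would choose a nonnegative test function $\phi$ supported in $V$ with $\int\phi\,d\mu^{\pm}_{\La_0} > 0$, and then $\La \mapsto \int\phi\,d\mu^{\pm}_{\La}$ is continuous and remains positive on a neighborhood of $\La_0$, forcing $V \cap J^{\pm}_{\La} \neq \emptyset$ there.

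For $\text{supp}(\mu^{\pm})$, the inclusion $\text{supp}(\mu^{\pm}) \subseteq \ov{\bigcup_{\La} J^{\pm}_{\La}}$ is straightforward: if $z \notin \ov{\bigcup_{\La} J^{\pm}_{\La}}$, a small neighborhood $V$ of $z$ misses every $J^{\pm}_{\La}$, so $\mu^{\pm}_{\La}(V) \equiv 0$ and the integral representation gives $\mu^{\pm}(V) = 0$. For the reverse inclusion, if $V$ is open with $V \cap J^{\pm}_{\La_0} \neq \emptyset$, the weak-$\ast$ continuity argument produces an open neighborhood $W$ of $\La_0$ in $X$ on which $\mu^{\pm}_{\La}(V) > 0$; since $\ov m$ is the product of probability measures $m$ that assign positive mass to open subsets of $M$, $\ov m(W) > 0$, and hence $\mu^{\pm}(V) \ge \int_W \mu^{\pm}_{\La}(V)\, d\ov m(\La) > 0$, placing $z$ in $\text{supp}(\mu^{\pm})$. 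The main technical hurdle I anticipate is precisely this weak-$\ast$ continuity of $\La \mapsto \mu^{\pm}_{\La}$ and the positive-measure statement needed for $W$; once that is in hand, everything else reduces to bookkeeping with Fubini and the invariance already established in Proposition \ref{P1}.
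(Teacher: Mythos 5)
Your proposal follows essentially the same route as the paper: the identity $\mu^{\pm}=\int_X\mu^{\pm}_{\La}$ by pairing with a test form and moving $dd^c$ across, the pullback formula by applying $dd^c$ to the invariance $G^{\pm}_{\La}\circ H^{\pm 1}_{\la}=d\,G^{\pm}_{\la\La}$, the averaged identities via the pushforward of $m\times\ov m$ under $(\la,\La)\mapsto\la\La$, and the support statement from weak-$\ast$ continuity of $\La\mapsto\mu^{\pm}_{\La}$ combined with positivity of $\ov m$ on open subsets of $X$. The only place you diverge is in spelling out the identification $\mathrm{supp}(\mu^{\pm}_{\La})=J^{\pm}_{\La}$ and the lower semi-continuity of $\La\mapsto J^{\pm}_{\La}$ (via the minimum principle for the nonnegative pluriharmonic $G^{\pm}_{\La}$, and the weak-$\ast$ continuity argument, respectively), where the paper simply cites the analogous Proposition 1.2 of \cite{PV}; your filled-in argument is the standard one and is correct.
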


\no Since for each $\La \in X$, $\ov {K^+_{\La}}$ (the closure of $K^+_{\La}$ in $\mbb P^2$) equals $K^+_{\La} \cup I^+$, it follows that $\mu_{\La}^+$ admits an extension to $\mbb P^2$ as a positive closed $(1, 1)$-current, wherein the extension puts no mass on $I^+$. Likewise, as $EG^+$ has logarithmic growth near infinity, it follows that $\mu^+$ has bounded mass near the hyperplane at infinity $\{t = 0\}$ in $\mbb P^2$ and hence $\mu^+$ also extends to $\mbb P^2$ as a positive closed $(1,1)$-current. In both cases, the extensions will again be denoted by $\mu_{\La}^+$ and $\mu^+$.

\medskip

Let $\omega_{FS}$ be the standard Fubini-Study form on $\mbb P^2$ satisfying $\int_{\mbb P^2} \omega_{FS}^2 = 1$. By the $dd^c$-lemma, every positive closed $(1,1)$-current $S$ on $\mbb P^2$ can be written as $S = c \omega_{FS} + dd^c u$ for some integrable function $u$; here
\[
c = \int_{\mbb P^2} S \wedge \omega_{FS}
\]
is the mass of $S$. Such a $u$ is called a quasi-potential for $S$. Any pair of quasi-potentials $u_1, u_2$ must satisfy $dd^c(u_1 - u_2) = 0$ and hence the choice of a quasi-potential is unique modulo an additive constant. Let $\{S_n\}$ be a sequence of positive closed $(1,1)$-currents of mass $1$ on $\mbb P^2$ that admit a sequence $\{u_n\}$ of quasi-potentials that are uniformly bounded near $I^-$. Theorem 6.6 in Dinh--Sibony \cite{DS} shows that for a single H\'{e}non map $H$ of degree $d$, the sequence $d^{-n} (H^n)^{\ast}(S_n)$ converges exponentially fast to the corresponding Green current. It turns out that a stronger convergence property holds. In lieu of the iterates of a single map, it is possible to pull back the currents $S_n$ by $H_{n, \La}^+$ as in $(1.2)$ and the following theorem shows that this sequence converges to the corresponding random stable Green current exponentially fast.

\begin{thm}\label{thm1}
Let $\{S_n\}$ be a sequence of positive closed $(1,1)$-currents of mass $1$ on $\mbb P^2$. Assume that there exists $C > 0$ and a neighborhood $U$ of $I^-$ in $\mbb P^2$ such that each $S_n$ admits a quasi-potential $u_n$ that satisfies $\vert u_n \vert \le C$ on $U$. Then there exists a uniform constant $A > 0$ such that
\[
\vert \langle d^{-n} (H_{n, \La}^+)^{\ast}(S_n) - \mu_{\La}^+, \phi \rangle \vert \le And^{-n} \Vert \phi \Vert_{C^1}
\]
for all $\La \in X$ and for every $C^2$-smooth test form $\phi$ on $\mbb P^2$.
\end{thm}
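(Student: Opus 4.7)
The plan is to reduce the difference $\tfrac{1}{d^n}(H_{n,\La}^+)^{\ast}(S_n) - \mu_\La^+$ to $\tfrac{1}{d^n}\,dd^c$ of a single potential by exploiting the pullback invariance from Proposition~\ref{P3}, and then to control this potential by combining the hypothesis on $u_n$ near $I^-$ with the uniform bounds on $\{G_\La^+\}$ supplied by Proposition~\ref{P1}.

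Using $S_n = \om_{FS} + dd^c u_n$ from the hypothesis, and writing $\mu_\La^+ = \om_{FS} + dd^c g_\La^+$ for a continuously chosen family of quasi-potentials $g_\La^+$, one can normalize so that $\{g_\La^+\}_{\La \in X}$ is uniformly bounded on a (possibly shrunken) neighborhood $V \subset U$ of $I^-$; this follows from the uniform logarithmic growth in Proposition~\ref{P1} together with the continuity of $\La \mapsto G_\La^+$ on the compact parameter set. Iterating the identity $(H_\la)^{\ast}\mu_\La^+ = d\,\mu_{\la\La}^+$ from Proposition~\ref{P3} gives
\[
(H_{n,\La}^+)^{\ast}\, \mu_{\La_{n+1}}^+ \;=\; d^n\, \mu_\La^+,
\]
so subtracting the two quasi-potential decompositions yields the key identity
\[
\frac{1}{d^n}(H_{n,\La}^+)^{\ast}(S_n) - \mu_\La^+ \;=\; \frac{1}{d^n}\, dd^c\!\left((u_n - g_{\La_{n+1}}^+) \circ H_{n,\La}^+\right).
\]
Pairing with a $C^2$ test form $\phi$ and integrating by parts then reduces matters to producing a bound of the form
\[
\left|\int_{\mbb P^2} (u_n - g_{\La_{n+1}}^+) \circ H_{n,\La}^+ \cdot dd^c\phi\right| \;\le\; A\,n\,\|\phi\|_{C^1}
\]
with $A$ independent of $\La, n, \phi$.

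The main obstacle is this last estimate. I would follow the telescoping strategy of Theorem~6.6 in~\cite{DS}: decompose $H_{n,\La}^+$ into its $n$ single-step factors $H_{\la_j}$, and at each step use that $H_{\la_j}$ sends a fixed neighborhood of $I^-$ strictly into itself (since $I^-$ is a common attracting fixed point of the entire family $\{H_\la\}_{\la \in M}$), together with the uniform separation $I_\infty^+ \cap I_\infty^- = \emptyset$ emphasized in the introduction. Each one-step contribution is then controlled by a pluripotential-theoretic inequality of the form $|\langle dd^c w, \phi\rangle| \le C\,\sup_V|w|\cdot\|\phi\|_{C^1}$ applied to a quasi-potential $w$ uniformly bounded on $V$, and summing the $n$ one-step errors produces the factor $n$ in the final bound. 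The substantive new feature compared to the single-map situation is that all constants must be uniform over $\La \in X$; this uniformity is precisely what the continuity and compactness built into the definition of the family $\cal H$, together with Propositions~\ref{P1} and~\ref{P3}, were engineered to guarantee.
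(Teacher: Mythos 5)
Your reduction is exactly the one the paper uses: writing $v_{\Lambda_n} = u_n - q_{\Lambda_{n+1}}^+$ for a uniformly chosen quasi-potential $q_{\Lambda_{n+1}}^+$ of $\mu_{\Lambda_{n+1}}^+$, and using $(H_{n,\La}^+)^{\ast}\mu_{\Lambda_{n+1}}^+ = d^n\mu_\La^+$, the problem becomes estimating
\[
d^{-n}\big\langle v_{\Lambda_n},\, (H_{n,\La}^+)_{\ast}(dd^c\phi)\big\rangle .
\]
Up to this point you match the paper. The gap is in how you propose to close this estimate.

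The inequality you invoke, $|\langle dd^c w,\phi\rangle| \le C\,\sup_V|w|\cdot\|\phi\|_{C^1}$, is not true: after integrating by parts you must pair $w$ against $dd^c\phi$, which is a measure living on all of $\mbb P^2$, so a bound on $w$ only over a neighborhood $V$ of $I^-$ cannot control it. The whole subtlety of the proof is precisely that $v_{\Lambda_n}$ is \emph{only} uniformly bounded near $I^-$; away from $I^-$ one knows merely that $v_{\Lambda_n}$ lies in a bounded subset of DSH$(\mbb P^2)$. Correspondingly, the paper splits $\gamma_n := (H_{n,\La}^+)_\ast(dd^c\phi)$ into $\gamma_n''$ (the part on $U$, where $|v_{\Lambda_n}|\le A_1$ gives an $O(d^{-n})$ contribution) and $\gamma_n'$ (the part on $\mbb P^2 \sm U$). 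For $\gamma_n'$ the mass is $\le 1$ but $\|\gamma_n'\|_\infty$ can grow like $L^{4n}$, because $H_\la^{-1}$ maps $\mbb P^2\sm U$ into itself with $C^1$ norm $\le L$. The factor $n$ in the final bound comes from the exponential estimate for DSH functions (Corollary 3.13 in \cite{DS}), which yields $|\langle \gamma_n', v_{\Lambda_n}\rangle| \lesssim 1 + \log^+\|\gamma_n'\|_\infty \lesssim n$. This is the single indispensable input; it has no analogue in your sketch.

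Your ``telescoping over the $n$ single-step factors'' is also not what produces the $n$: there is no summation of $n$ bounded one-step errors here, because $\mu_\La^+ = d^{-n}(H_{n,\La}^+)^\ast\mu_{\Lambda_{n+1}}^+$ holds exactly and collapses the difference in one stroke. The $n$ is logarithmic in origin, not additive. To repair the proof you need to (i) record that $\{v_{\Lambda_n}\}$ is bounded in DSH$(\mbb P^2)$ (from the fact that $dd^c v_{\Lambda_n} = S_n - \mu_{\Lambda_{n+1}}^+$ has uniformly bounded $\ast$-norm, via Lemma 3.11 of \cite{DS}), (ii) establish the $L^\infty$ bound $\|\gamma_n'\|_\infty \le L^{4n}$ on $\mbb P^2\sm U$ from the Lipschitz bound on $H_\la^{-1}$ there, uniform in $\la\in M$, and (iii) invoke Corollary 3.13 of \cite{DS} to get the $nd^{-n}$ decay of the pairing against $\gamma_n'$. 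The uniformity in $\La$ is then automatic because all constants ($A_1$, $L$, the DSH bound) depend only on the compact family $\cal H$, as you correctly anticipated.
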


\begin{rem}\label{rem1}
Once we prove the above theorem, following a similar string of arguments as in \cite{DS}, one can prove that for each $\Lambda\in X$, $K_\Lambda^+$ is rigid i.e. $\mu_\Lambda^+$ is the unique closed $(1,1)$-current of mass $1$ supported on the set $K_\Lambda^+$.
\end{rem}

Let $\cal S$ be the cone of positive closed $(1,1)$-currents on $\mbb P^2$. For $S \in \cal S$, define
\[
\Theta(S) = \frac{1}{d} \int_M H_{\la}^{\ast}S.
\]
It is evident that $\Theta(S)$ is also a positive closed $(1,1)$-current and thus $\Theta : \cal S \ra \cal S$ is well defined.

\begin{thm}\label{thm2}
Let $\{ S_n \}$ be a sequence of positive closed $(1,1)$-currents on $\mbb P^2$ of mass $1$. Assume that there exists $C > 0$  such that each $S_n$ admits a quasi-potential $u_n$ satisfying $\vert u_n \vert \le C$ on a fixed neighborhood $U$ of $I^-$ in $\mbb P^2$. Then $\Theta^n(S_n) \ra \mu^+$ as $n \ra \infty$ in the sense of currents.
\end{thm}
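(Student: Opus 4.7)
The plan is to reduce Theorem \ref{thm2} to Theorem \ref{thm1} by averaging over the parameter space $X$. First, I would unfold the iterate $\Theta^n(S_n)$ into a single average of pullbacks. From the definition $\Theta(S) = d^{-1}\int_M H_{\la}^\ast S\, dm(\la)$, an inductive use of Fubini gives
\[
\Theta^n(S_n) = \frac{1}{d^n}\int_X (H_{n,\La}^+)^\ast S_n\; d\ov m(\La),
\]
where $\ov m$ restricts to $m^{\times n}$ on the first $n$ factors. A check at $n=2$ confirms that the order of composition in (1.2), namely $H_{n,\La}^+=H_{\la_n}\circ\cdots\circ H_{\la_1}$, is precisely the order produced when $\Theta$ is iterated against a test form. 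Measurability of $\La\mapsto\langle (H_{n,\La}^+)^\ast S_n,\phi\rangle$ follows from the continuity of $\la\mapsto H_\la$ and the uniformly bounded mass of the currents $(H_{n,\La}^+)^\ast S_n$ (guaranteed by the quasi-potential hypothesis), which also gives the uniform integrability needed to swap integral and pairing.

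Next, Proposition \ref{P3} gives the companion identity $\mu^+ = \int_X \mu_\La^+\, d\ov m(\La)$. Combining the two identifications, for any smooth test form $\phi$ on $\mbb P^2$,
\[
\langle \Theta^n(S_n)-\mu^+,\phi\rangle = \int_X \left\langle d^{-n}(H_{n,\La}^+)^\ast S_n - \mu_\La^+,\phi\right\rangle d\ov m(\La).
\]
Since the hypotheses on $\{S_n\}$ in Theorem \ref{thm2} are exactly those of Theorem \ref{thm1} (uniformly bounded quasi-potentials on a fixed neighborhood of $I^-$), Theorem \ref{thm1} applies pointwise in $\La\in X$ and gives the uniform estimate
\[
\left| \left\langle d^{-n}(H_{n,\La}^+)^\ast S_n - \mu_\La^+,\phi\right\rangle \right| \le A n d^{-n}\Vert\phi\Vert_{C^1}
\]
for every $\La$. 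Integrating this bound against the probability measure $\ov m$ on $X$ yields
\[
\bigl|\langle \Theta^n(S_n)-\mu^+,\phi\rangle\bigr| \le A n d^{-n}\Vert\phi\Vert_{C^1}\ra 0
\]
as $n\ra\infty$, which is weak convergence $\Theta^n(S_n)\ra\mu^+$ in the sense of currents; in fact the argument delivers the same geometric rate $n d^{-n}$ as in Theorem \ref{thm1}, which is a strictly stronger statement than the theorem asks for.

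The main obstacle is the opening identification $\Theta^n(S)=d^{-n}\int_X (H_{n,\La}^+)^\ast S\, d\ov m(\La)$. One must justify applying Fubini at the level of currents: measurability of the integrand in $\La$, the fact that for each fixed $\La$ the pullback $(H_{n,\La}^+)^\ast S_n$ is a well-defined positive closed $(1,1)$-current on $\mbb P^2$ (so that the indeterminacy point of $H_\la$ at $I^+$ does not cause trouble — this uses the bounded quasi-potential assumption near $I^-$), and the correct matching of the iteration order with (1.2). Once this bookkeeping is in place, everything else reduces to integrating the already-proved Theorem \ref{thm1} and invoking Proposition \ref{P3}.
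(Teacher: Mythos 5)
Your proposal is correct and takes essentially the same approach as the paper: both reduce to the identity $\Theta^n(S_n) = d^{-n}\int_X (H^+_{n,\La})^\ast S_n\,d\ov m(\La)$, use $\mu^+=\int_X\mu_\La^+$ from Proposition \ref{P3}, and then invoke Theorem \ref{thm1} fiberwise in $\La$. The only cosmetic difference is that the paper concludes via the dominated convergence theorem (using pointwise convergence of $F_n(\La)=\langle d^{-n}(H^+_{n,\La})^\ast S_n,\varphi\rangle$ plus the uniform mass bound), whereas you integrate the uniform quantitative estimate of Theorem \ref{thm1} directly and thereby also extract the rate $O(nd^{-n})$, a minor strengthening.
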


Following Dinh-Sibony \cite{DS}, recall that a set $A$ in a complex manifold $\Sigma$ is {\it rigid} if $A$ supports at most one non-zero positive closed $(1,1)$-current modulo multiplicative constants.

\begin{thm}\label{thm3}
For each $\La \in X$, the random Green current $\mu_{\La}^+$ is the unique positive closed $(1,1)$-current of mass $1$ supported on the set $\ov{K_{\La}^+}$. Hence, $\ov{K_{\La}^+}$ is rigid in $\mbb P^2$.
\end{thm}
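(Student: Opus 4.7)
The plan is to reduce the uniqueness statement to the exponential convergence result of Theorem \ref{thm1} and then to adapt the quasi-potential argument of \cite{DS} to the present random setting, as already hinted at in Remark \ref{rem1}.

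First I would verify that any positive closed $(1,1)$-current $S$ of mass $1$ supported on $\ov{K_{\La}^+}$ satisfies the hypothesis of Theorem \ref{thm1}. Since $K_{\La}^+$ is bounded in $\mbb C^2$ by Proposition \ref{P1}, the closure $\ov{K_{\La}^+} = K_{\La}^+ \cup \{I^+\}$ in $\mbb P^2$ is disjoint from a fixed neighborhood $U$ of $I^-$. On $U$ one has $S = 0$, so any quasi-potential $u$ of $S$ satisfies $dd^c u = -\omega_{FS}$ on $U$; since $\omega_{FS}$ is smooth, $u$ agrees on $U$ with a smooth local potential of $-\omega_{FS}$ modulo a pluriharmonic function, and is in particular bounded on a slightly smaller neighborhood of $I^-$. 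Applying Theorem \ref{thm1} to the constant sequence $S_n \equiv S$ then yields
\[
d^{-n}(H_{n,\La}^+)^{\ast}(S) \ra \mu_{\La}^+ \quad \text{as } n \ra \infty,
\]
with exponential rate $O(nd^{-n})$. On the other hand, iterating the invariance from Proposition \ref{P3} gives the exact identity $d^{-n}(H_{n,\La}^+)^{\ast}(\mu_{\La_{n+1}}^+) = \mu_{\La}^+$ for every $n \ge 1$.

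To upgrade this convergence to an equality $S = \mu_\La^+$, I would follow the strategy of \cite{DS}. Write $S = \omega_{FS} + dd^c u$ and $\mu_\La^+ = \omega_{FS} + dd^c g_\La$ with $u$ and $g_\La$ chosen to be bounded near $I^-$; then $S - \mu_\La^+ = dd^c(u - g_\La)$ is supported in $\ov{K_\La^+}$, so $u - g_\La$ is pluriharmonic on the connected open set $\mbb P^2 \sm \ov{K_\La^+}$. Translating the $L^1_{\mathrm{loc}}$-convergence of rescaled quasi-potentials coming from Theorem \ref{thm1}, and exploiting the uniform separateness $I^+_{\infty} \cap I^-_{\infty} = \emptyset$ noted in the introduction together with pluripotential-theoretic estimates near $I^-$, one shows that $u - g_\La$ must in fact be constant on $\mbb P^2$, whence $S = \mu_\La^+$. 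The rigidity of $\ov{K_\La^+}$ is then an immediate consequence: any non-zero positive closed $(1,1)$-current $T$ supported on $\ov{K_\La^+}$ has positive mass $c$, and applying the uniqueness to $T/c$ gives $T = c\,\mu_\La^+$.

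The main obstacle I anticipate is the last step. The convergence $d^{-n}(H_{n,\La}^+)^{\ast}(S) \ra \mu_\La^+$ by itself does not distinguish $S$ from $\mu_\La^+$, since the same limit arises if $S$ is replaced by $\mu_\La^+$. The sharpening needed---forcing $u - g_\La$ to be globally constant rather than merely pluriharmonic off $\ov{K_\La^+}$---is precisely the rigidity argument of \cite{DS} for a single H\'enon map, and the substance of the proof is to verify that it carries over verbatim to the random setting, the only new ingredient being the uniformity in $\La$ already built into Theorem \ref{thm1}.
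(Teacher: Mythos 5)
Your first paragraph (verifying the quasi-potential hypothesis) is fine apart from one slip: $K_\La^+$ is \emph{not} bounded in $\mbb C^2$ (only $K_\La = K_\La^+ \cap K_\La^-$ is); what is true, and what the paper uses, is that $\ov{K_\La^+} = K_\La^+ \cup \{I^+\}$ in $\mbb P^2$, so that $\ov{K_\La^+}$ misses a fixed neighborhood of $I^-$.

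The substantive problem is in the second and third paragraphs, and you actually diagnose it yourself: applying Theorem~\ref{thm1} to the constant sequence $S_n \equiv S$ gives $d^{-n}(H_{n,\La}^+)^{\ast}S \ra \mu_\La^+$, which never distinguishes $S$ from $\mu_\La^+$, and the additional ``$u - g_\La$ is pluriharmonic off $\ov{K_\La^+}$, hence constant'' step is exactly the rigidity statement being proved. You cannot invoke it; you have to produce it, and your proposal leaves that entirely to an unstated ``carry over the DS rigidity argument.'' As written, this is circular.

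The paper avoids this dead end by a different, one-line trick: instead of pulling back $S$ itself, one first \emph{pushes $S$ forward}. Set $S_{n,\La} = d^n (H_{\la_n}\circ\cdots\circ H_{\la_1})_\ast S$, a positive closed $(1,1)$-current supported in $K_{\La_n}^+$, hence extendable by zero across the line at infinity and vanishing near $I^-$; each $S_{n,\La}$ has mass $1$ and, by Proposition~8.3.6 of \cite{MNTU}, admits a quasi-potential bounded near $I^-$ uniformly in $n$. So the \emph{sequence} $(S_{n,\La})$ satisfies the hypotheses of Theorem~\ref{thm1}, and therefore $d^{-n}(H_{\la_n}\circ\cdots\circ H_{\la_1})^{\ast}(S_{n,\La}) \ra \mu_\La^+$. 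But since each $H_{\la_i}$ is an automorphism, $d^{-n}(H_{\la_n}\circ\cdots\circ H_{\la_1})^{\ast}(S_{n,\La}) = S$ identically for every $n$. A sequence constantly equal to $S$ converging to $\mu_\La^+$ forces $S = \mu_\La^+$, with no quasi-potential comparison needed. This is the idea your proposal is missing: use the freedom in Theorem~\ref{thm1} to choose the sequence $S_n$, and choose it so that the pullback is literally $S$ at every stage. Your use of the constant sequence throws that freedom away and leaves you with a limit statement that cannot, by itself, pin $S$ down.
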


\no While all this discussion extends and strengthens the work of Dinh--Sibony \cite{DS}, Bedford--Smillie (\cite{BS1, BS2, BS3}) to the random setting, it turns out that there are several consequences and benefits of such a generalization. A basic application that we had in mind was to understand the dynamical properties of skew products of H\'{e}non maps, which is a map $H : M \times \mbb C^2 \ra M \times \mbb C^2$ defined by
\begin{equation}\label{homeo}
H(\la, x, y) = (\si(\la), H_{\la}(x, y))
\end{equation}
where $M \subset \mbb C^k$ is compact, $\sigma : M \ra M$ is a homeomorphism and each $H_{\la}(x, y)$ is as in $(1.1)$. The $n$-fold iterates of $H^{\pm}$ are given by
\[
H^n(\la, x, y) = (\si^n(\la), H_{\si^{n-1}(\la)} \circ H_{\si^{n-2}(\la)} \circ \cdots \circ H_{\la}(x,y))
\]
and
\[
H^{-n}(\la, x, y) = (\si^{-n}(\la), H^{-1}_{\si^{-n}(\la)} \circ H^{-1}_{\si^{-(n-1)}(\la)} \circ \cdots \circ H^{-1}_{\si^{-1}(\la)}(x,y))
\]
respectively. Note that the second coordinate of $H^n(\la, x, y)$ is precisely of the form $H_{n, \La}^{+1}$ (as in $(1.2)$) with $\La = (\la, \si(\la), \si^2(\la), \ldots)$ and hence all considerations previously discussed in the random setup can be applied here. The first coordinate $\si^n(\la)$ does not come into play in a coarse understanding of the dynamics of $H$ since $M$ is compact. Not surprisingly, it does however influence the ergodic properties of $H$ as we shall see later. Though the study of skew products is of intrinsic interest, the choice of a map as in $(1.3)$ was primarily motivated by two examples. The first one, which is a prototype model, is a skew product over the circle $S^1 \subset \mbb C$ defined by
\begin{equation}
H_{\al}(\la, x, y) = (e^{2\pi i \alpha} \la, H_{\la}(x, y))
\end{equation}
where $\la \in S^1$ and $\alpha$ is a fixed irrational. Since much is known about each fiber map $H_{\la}(x, y)$, it seemed plausible to understanding this family of maps over an irrational rotation by attempting to lift all known considerations to $S^1 \times \mbb C^2$. Second, let us recall the the Fornaess--Wu \cite{FW} classification of polynomial automorphisms of $\mbb C^3$ of degree at most $2$. This classification has $7$ families and several of them are skew products of H\'{e}non maps for suitable choices of the parameters that are involved. It must also be mentioned that Coman--Fornaess \cite{CF} have studied the dynamics of these classes of automorphisms by treating them as self-maps of $\mbb C^3$; the view point adopted here (and in \cite{PV}) is to think of them as skew products with the intention of studying their ergodic properties. In \cite{PV}, the hypotheses on $\si$ were rather general -- in fact, $\si$ was assumed to be only continuous. A moment's reflection shows that the global map $H = H(\la, x, y)$ (as in $(1.3)$) is no longer invertible then. In this situation, the lower bound on the topological entropy of $H$ that was obtained in \cite{PV} did not reflect the contribution of $\si$. However, if we assume that $\si : M \ra M$ is a homeomorphism (which fits in well with several sub-cases of the Fornaess--Wu classification), it is possible to say much more -- among other things, we obtain better lower bounds for the topological entropy of $H$ as well as its largest Lyapunov exponent.

\medskip

 In what follows then, we will work with the global map $H = H(\la, x, y)$ as in $(1.3)$ and $\si : M \ra M$ will be assumed to be a {\it homeomorphism}. The first step is to make all this discussion precise.

\medskip

 For each $\la \in M$, the sets $I^{\pm}_{\la}$, $K^{\pm}_{\la}$ of non-escaping and escaping points respectively are defined as
\[
I^{\pm}_{\lambda} = \left\{ z = (x, y) \in \mbb C^2 : \Vert H_{\la}^{\pm n}(z) \Vert \ra \infty \; \text{as} \; n \ra \infty \right\}
\]
and
\[
K^{\pm}_{\la} = \left\{ z = (x, y) \in \mbb C^2 : \text{the orbit} \; \big( H_{\la}^{\pm n}(z)\big)_{n \ge 0} \; \text{is bounded in} \; \mbb C^2  \right\}.
\]
For each $\la \in M$, it can be seen that $H_{\la}(K^{\pm}_{\la}) = K^{\pm}_{\si(\la)}$ and $H_{\la}(I^{\pm}_{\la}) = I^{\pm}_{\si(\la)}$. Let $K_{\la} = K^+_{\la} \cap K^-_{\la}$, $J^{\pm}_{\la} = \pa K^{\pm}_{\la}$ and $J_{\la} = J^+_{\la} \cap J^-_{\la}$. Thus $H_{\la}(K_{\la}) = K_{\si(\la)}$ and $H_{\la}(J_{\la}) = J_{\si(\la)}$.

\medskip

The sets $I^{\pm}_{\la}, K^{\pm}_{\la}$ can be put together as
\[
I^{\pm} = \bigcup_{\la \in M} \{ \la \} \times I^{\pm}_{\la} \; \text{and} \; K^{\pm} = \bigcup_{\la \in M} \{ \la \} \times K^{\pm}_{\la}
\]
and since $M$ is compact, these can be interpreted as
\[
I^{\pm} = \left\{ (\la, x, y) \in M \times \mbb C^2 : \Vert H^{\pm n}(\la, x, y) \Vert \ra \infty \; \text{as} \; n \ra \infty  \right\}
\]
and
\[
K^{\pm} = \left\{ (\la, x, y) \in M \times \mbb C^2 : \text{the orbit} \; \big( H^{\pm n}(\la, x, y)\big)_{n \ge 0} \; \text{is bounded in} \; \mbb C^2  \right\}.
\]
respectively. Both $I^{\pm}, K^{\pm}$ are completely invariant under $H$.

\medskip

For $n \ge 0$ and $(x, y) \in \mbb C^2$, let
\[
G^{\pm}_{n, \la}(x, y) = \frac{1}{d^n} \log^+ \Vert \pi_2 \circ H^{\pm n}(\la, x, y) \Vert,
\]
where $\pi_2$ is the projection on the second factor. For a fixed $\la \in M$, Proposition $1.1$ shows that the sequence $G^{\pm}_{n, \la}(x, y)$ converges uniformly on compact subsets of $\mbb C^2$ to a continuous function $G_{\la}^{\pm}$ as $n \ra \infty$ which satisfies
\[
G^{\pm}_{\si(\la)} \circ H_{\la} = d^{\pm 1} G^{\pm}_{\la}
\]
on $\mbb C^2$. This invariance property agrees with what is known for a single H\'{e}non map (which happens when $M$ is a single point). The functions $G^{\pm}_{\la}$ are positive pluriharmonic on $I^{\pm}_{\la} = \mbb C^2 \sm K^{\pm}_{\la}$, plurisubharmonic on $\mbb C^2$ and vanish precisely on $K^{\pm}_{\la}$. The correspondences $\la \mapsto G^{\pm}_{\la}$ are continuous and $G^{\pm}_{\la}$ are locally uniformly H\"{o}lder continuous on $\mbb C^2$. Thus $\mu_{\la}^{\pm} = 1/2 \pi dd^c G^{\pm}_{\la}$ are positive closed $(1,1)$-currents on $\mbb C^2$ of mass $1$ and $\mu_{\la} = \mu^+_{\la} \wedge \mu^-_{\la}$ is a family of probability measures on $\mbb C^2$.

\medskip

It can also be checked, along the lines of Proposition $1.3$, that
\[
(H_{\la})^{\ast}  \mu^{\pm}_{\si(\la)} = d^{\pm 1} \mu^{\pm}_{\la}, \; (H_{\la})_{\ast} \mu^{\pm}_{\la} = d^{\mp 1} \mu^{\pm}_{\si(\la)} \;\text{and} \; (H_{\la})^{\ast} \mu_{\si(\la)} = \mu_{\la}.
\]
Furthermore, the support of $\mu^{\pm}_{\la}$ equals $J^{\pm}_{\la}$ and $\la \mapsto J^{\pm}_{\la}$ is lower semi-continuous. Finally, for each $\la \in M$, the pluricomplex Green function of the compact set $K_{\la}$ is $\max \{ G^+_{\la}, G^-_{\la} \}$, $\mu_{\la}$ is the equilibrium measure of $K_{\la}$ and the support of $\mu_{\la}$ is contained in $J_{\la}$ (see \cite{BS1}, \cite{PV}).

\medskip

Let $\mu'$ be an invariant probability measure for $\si$ on $M$. Then
\begin{equation}
\langle \mu, \varphi \rangle = \int_M \left( \int_{\{\la\} \times \mbb C^2} \varphi \; \mu_{\la}\right) \mu'(\la)
\end{equation}
defines a measure on $M \times \mbb C^2$ by describing its action on continuous functions $\varphi$ on $M \times \mbb C^2$. Since $(H_{\la})^{\ast} \mu_{\si(\la)} = \mu_{\la}$, it follows that $H^{\ast} \mu = \mu$ on $M \times \mbb C^2$. Thus $\mu$ is an invariant probability measure for $H$. 
 
\begin{thm}\label{thm4}
If $\si$ is mixing for $\mu'$, then $H$ is mixing for $\mu$, i.e.,
\[
\lim_{n \ra \infty} \int (H^{n \ast} \varphi) \psi \; d \mu = \int \varphi\; d\mu \int \psi \; d \mu
\]
for all continuous functions $\varphi, \psi$ with compact support on $M \times \mbb C^2$.
\end{thm}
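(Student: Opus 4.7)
Since $\mu$ has compact support (contained in $\bigcup_{\la \in M}\{\la\} \times J_\la \subset M \times \mbb C^2$, which is bounded by compactness of $M$) and finite mass, Stone--Weierstrass lets us reduce to tensor-product test functions $\varphi(\la,z) = f(\la)g(z)$ and $\psi(\la,z) = f'(\la)g'(z)$ with $f, f' \in C(M)$ and $g, g' \in C_c^{\infty}(\mbb C^2)$. The iterate $H^n(\la,z) = (\si^n \la, H^{+}_{n,\La(\la)}(z))$ with $\La(\la) := (\la, \si\la, \si^2\la, \ldots) \in X$ then gives
\[
\int (\varphi \circ H^n)\,\psi \, d\mu \;=\; \int_M f(\si^n \la)\, f'(\la)\, J_n(\la) \, d\mu'(\la),
\]
where $J_n(\la) := \int_{\mbb C^2} g \circ H^{+}_{n,\La(\la)} \cdot g' \, d\mu_\la$. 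Set $\ti g(\la) := \int g \, d\mu_\la$ and $\ti g'(\la) := \int g' \, d\mu_\la$; these lie in $C(M)$ by continuity of $\la \mapsto \mu_\la$. Iterating $(H_\la)_{\ast} \mu_\la = \mu_{\si \la}$ (equivalent to $(H_\la)^{\ast}\mu_{\si \la} = \mu_\la$ since each $H_\la$ is a biholomorphism of $\mbb C^2$) yields the trivial identity $\int g \circ H^{+}_{n,\La(\la)} \, d\mu_\la = \ti g(\si^n \la)$.

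The main step is the uniform fiber equidistribution
\[
\sup_{\la \in M} \Big| J_n(\la) - \ti g(\si^n \la)\, \ti g'(\la) \Big| \ra 0 \quad \text{as } n \ra \infty,
\]
the random-parameter analog of exponential decay of correlations for a single H\'enon map. My plan is to adapt the Dinh--Sibony mixing argument from \cite{DS}: substitute $\mu_\la = \mu_\la^{+} \wedge \mu_\la^{-}$ and $\mu_\la^{+} = d^{-n}(H^{+}_{n,\La(\la)})^{\ast} \mu^{+}_{\si^n \la}$ into $J_n(\la)$, then use adjunction to rewrite it as a pairing to which Theorem~\ref{thm1}, applied along the particular orbit $\La(\la)$, delivers an exponentially small remainder. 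Because the constant in Theorem~\ref{thm1} is uniform in $\La \in X$, one obtains $|J_n(\la) - \ti g(\si^n \la)\ti g'(\la)| = O(nd^{-n})$ uniformly in $\la \in M$, with the implied constant controlled by the $C^2$-norms of $g, g'$. This is the main obstacle: one must first decompose $g, g'$ through quasi-potentials (as in \cite{DS}) so that the non-closed densities $g\, \mu^{+}_{\si^n \la}$ and $g' \mu_\la^{-}$ are expressible through closed currents of the type Theorem~\ref{thm1} handles, while keeping all estimates independent of $\la$.

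Granting the fiber equidistribution,
\[
\int (\varphi \circ H^n)\,\psi \, d\mu \;=\; \int_M (f\ti g)(\si^n \la)\,(f'\ti g')(\la) \, d\mu'(\la) \;+\; o(1).
\]
Since $f\ti g$ and $f'\ti g'$ are continuous on $M$, the hypothesized mixing of $\si$ for $\mu'$ drives the right-hand integral to $\big(\int_M f\ti g \, d\mu'\big)\big(\int_M f'\ti g' \, d\mu'\big)$. Unwinding the definition of $\mu$ via Fubini identifies this product with $\big(\int \varphi \, d\mu\big)\big(\int \psi \, d\mu\big)$, which completes the proof.
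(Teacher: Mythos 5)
Your outer reduction matches the paper's: after using $H^n(\lambda,z)=(\sigma^n\lambda, H^+_{n,\Lambda(\lambda)}(z))$ and $(H^n_\lambda)^*\mu_{\sigma^n\lambda}=\mu_\lambda$ to split $\int(\varphi\circ H^n)\psi\,d\mu$ into a $\sigma$-mixing term plus a fiber remainder, both arguments reduce to showing that the fiber correlations decay. (Your Stone--Weierstrass reduction to $f(\lambda)g(z)$ with $g\in C^\infty_c$ is a clean way to make precise the smoothness in $z$ that the paper uses implicitly when it writes $dd^c\varphi_\lambda$ in (4.7).) The problem is in the key step, which you flag yourself as ``the main obstacle'' and do not carry out. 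Your plan --- substitute $\mu^+_\lambda=d^{-n}(H^+_{n,\Lambda})^*\mu^+_{\sigma^n\lambda}$ and apply Theorem~\ref{thm1} with a uniform $O(nd^{-n})$ bound --- runs into the fact that the currents $g\,\mu^+_{\sigma^n\lambda}$ and $g'\mu^-_\lambda$ are \emph{not closed}, so Theorem~\ref{thm1} does not apply to them. The phrase ``decompose $g,g'$ through quasi-potentials so that the non-closed densities are expressible through closed currents'' names the difficulty but is not a known maneuver that produces, uniformly in $\lambda$, closed $(1,1)$-currents of mass~$1$ with quasi-potentials uniformly bounded near $I^-$; as written this step simply isn't there.

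The paper's route to the fiber statement is genuinely different and softer. It proves Lemma~\ref{G le1} for each \emph{fixed} $\lambda$ by a rigidity-and-compactness argument: the currents $C_n=d^{-n}\langle\mu_{\sigma^n\lambda},\psi_{\sigma^n\lambda}\rangle^{-1}(H^n_\lambda)^*(\psi_{\sigma^n\lambda}\mu^+_{\sigma^n\lambda})$ have mass tending to $1$, any weak limit is shown (following the Step~2 argument of Theorem~$1.3$ in \cite{PV}) to be a closed positive $(1,1)$-current supported in $K^+_\lambda$, and Corollary~\ref{Cor1} (rigidity, itself a consequence of Theorem~\ref{thm1}) forces that limit to be $\mu^+_\lambda$. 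This yields only \emph{pointwise-in-$\lambda$} convergence, but that is enough: the paper then integrates against $G^-_\lambda$, uses a BS3-type estimate (Theorem~$1.6$(ii) of \cite{BS3}) to kill the derivative terms in (4.7), and closes with the dominated convergence theorem. Your insistence on a \emph{uniform} rate $\sup_\lambda|J_n(\lambda)-\ti g(\sigma^n\lambda)\ti g'(\lambda)|\to 0$ is stronger than needed and, given the non-closedness obstruction above, is not delivered by the tools the paper develops. Replacing the uniformity claim by the pointwise Lemma~\ref{G le1} plus dominated convergence would turn your outline into the paper's proof.
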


On the other hand, there is a related notion of mixing in the random setup. Following \cite{P}, for each $n \ge 0$, let $\{ \nu_n \}$ be a sequence of probability measures on a space, say $Y$ and $T_n : Y \ra Y$ a family of transformations that preserve the sequence $\{ \nu_n\}$, i.e., $T^{\ast}_n(\nu_{n+1}) = \nu_n$ for each $n \ge 0$. We say that the measure preserving sequence of transformations $\{T_n\}$ is randomly mixing for the sequence $\{ \nu_n\}$ if
\[
\int (T^{\ast}_n \varphi) \psi \; d \nu_0 - \int \varphi \; d \nu_n \int \psi \; d \nu_0 \ra 0
\]
as $n \ra \infty$. Note that for each $\la  \in M$, the sequence of H\'{e}non maps $\{ H_{\si^n(\la)} : n \ge 0\}$ is measure preserving for the sequence of probability measures  $\{ \mu_{\si^n(\la)} : n \ge 0\}$ on $\mbb C^2$.

\begin{prop}\label{P4}
For each $\la \in M$, the sequence of H\'{e}non maps $\{ H_{\si^n(\la)} : n \ge 0\}$ is randomly mixing for the sequence of probability measures $\{ \mu_{\si^n(\la)} : n \ge 0\}$.
\end{prop}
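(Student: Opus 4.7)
The plan is to adapt the exponential mixing argument for a single H\'{e}non map (Dinh--Sibony \cite{DS}, Bedford--Smillie \cite{BS3}) to the random non-stationary setting, using Theorem~\ref{thm1} as the central analytic tool. By density of smooth compactly supported functions in $C_c(\mbb C^2)$, I may assume $\varphi, \psi$ are smooth. With $\La = (\la, \si(\la), \si^2(\la), \ldots)$, the invariance $(H_\la)_* \mu_\la = \mu_{\si(\la)}$ established in the text iterates to $(H_{n, \La}^+)_* \mu_\la = \mu_{\si^n(\la)}$, hence $\int (\varphi \circ H_{n, \La}^+) \, d\mu_\la = \int \varphi \, d\mu_{\si^n(\la)}$. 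Setting $\tilde\psi := \psi - \int \psi \, d\mu_\la$, the random mixing claim reduces to showing
\[
I_n := \int (\varphi \circ H_{n, \La}^+) \, \tilde\psi \, d\mu_\la \to 0.
\]

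For the main step, use the wedge decomposition $d\mu_\la = \mu^+_\la \wedge \mu^-_\la$, the invariance $\mu^+_\la = d^{-n} (H_{n, \La}^+)^* \mu^+_{\si^n(\la)}$, and the chain identity $(\varphi \circ H_{n,\La}^+) \cdot (H_{n,\La}^+)^* T = (H_{n,\La}^+)^*(\varphi T)$. Combining these with pullback--pushforward adjointness on $\mbb P^2$ and the formula $(H_{n,\La}^+)_*(\tilde\psi \mu^-_\la) = d^n \tilde\Psi_n \mu^-_{\si^n(\la)}$, where $\tilde\Psi_n := \tilde\psi \circ (H_{n,\La}^+)^{-1}$, yields the symmetric reformulation
\[
I_n = \int \varphi \, \tilde\Psi_n \, d\mu_{\si^n(\la)}, \qquad \int \tilde\Psi_n \, d\mu_{\si^n(\la)} = \int \tilde\psi \, d\mu_\la = 0.
\]

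The final step is to apply Theorem~\ref{thm1} (together with its symmetric pushforward version obtained by running the same argument for the inverse family $\{H_{\si^i(\la)}^{-1}\}$, which is again a family of generalized H\'{e}non maps) to control the oscillation of $\tilde\Psi_n$ on the support of $\mu_{\si^n(\la)}$. I would approximate $\mu^{\pm}$ by smooth closed positive currents at scale $\ep$ (using the local uniform H\"{o}lder continuity of $\{G^{\pm}_\la\}$ from Proposition~\ref{P1} together with compactness of $M$), apply Theorem~\ref{thm1} to get an equidistribution error of order $O(n d^{-n} \ep^{-\be})$, and bound the smoothing error by $O(\ep^\al)$ in the relevant weak norms. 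The main technical obstacle is to verify that the leading term emerging from the smoothing procedure is annihilated by the zero-mean condition $\int \tilde\psi \, d\mu_\la = 0$---the exact analog of the cancellation exploited in the stationary proof of \cite{DS}---rather than contributing an $O(1)$ residual. Once that is secured, optimizing $\ep = \ep(n)$ delivers $I_n \to 0$ at a rate of the form $O(n d^{-cn})$ for some $c > 0$. Throughout, the compactness of $M$ is used to make all implicit constants uniform in $\la$, which is what distinguishes the random setting from the stationary one.
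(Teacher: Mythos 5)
Your setup and the symmetric reformulation are correct: the identities $(H_{n,\La}^+)^{*}\mu_{\si^n(\la)}=\mu_\la$, $(H_{n,\La}^+)^{*}\mu^+_{\si^n(\la)}=d^n\mu^+_\la$, $(H_{n,\La}^+)_{*}\mu^-_\la=d^n\mu^-_{\si^n(\la)}$ all hold, and the change of variables producing $I_n=\int\varphi\,\tilde\Psi_n\,d\mu_{\si^n(\la)}$ with $\int\tilde\Psi_n\,d\mu_{\si^n(\la)}=0$ is valid. This is, in fact, exactly the elementary observation the paper uses to deduce Proposition~\ref{P4} from equation~(\ref{G6}): once one knows $\langle\mu_{\si^n(\la)},\psi\,(H_\la^n)_*\varphi\rangle-\langle\mu_{\si^n(\la)},\psi\rangle\langle\mu_\la,\varphi\rangle\to 0$, pulling back by $(H_\la^n)^{*}\mu_{\si^n(\la)}=\mu_\la$ gives the result.

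However, the proposal does not actually prove the decorrelation; it only re-poses it. You flag the crucial step yourself (``the main technical obstacle is to verify that the leading term\ldots is annihilated by the zero-mean condition\ldots Once that is secured\ldots'') without securing it, and the mechanism you suggest will not straightforwardly work. Theorem~\ref{thm1} gives quantitative equidistribution against a \emph{fixed} $C^2$ test form $\phi$, with error $An d^{-n}\|\phi\|_{C^1}$; but $\tilde\Psi_n=\tilde\psi\circ(H_{n,\La}^+)^{-1}$ has $C^1$ norm growing exponentially in $n$, so applying the theorem to it in any naive way destroys the gain of $d^{-n}$. Smoothing $\mu^\pm$ at scale $\ep$ does not by itself invoke the zero-mean cancellation; there is no step in your outline where $\int\tilde\Psi_n\,d\mu_{\si^n(\la)}=0$ is actually used to kill a contribution, and the claimed $O(nd^{-n}\ep^{-\be})$ estimate is not justified. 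In short, the hard part of the proof is still missing.

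By contrast, the paper's route to (\ref{G6}) (carried out in the proof of Theorem~\ref{thm4}) is the Bedford--Smillie integration-by-parts scheme: write $\mu^-_\la=\frac{1}{2\pi}dd^cG^-_\la$, transfer $dd^c$ onto $\varphi$, show the three auxiliary terms vanish via an analogue of Theorem~$1.6$(ii) of \cite{BS3}, and then apply Lemma~\ref{G le1}, which says $d^{-n}(H_\la^n)^{*}(\psi\,\mu^+_{\si^n(\la)})-\langle\mu_{\si^n(\la)},\psi\rangle\mu^+_\la\to 0$. Lemma~\ref{G le1} is where the dynamical input lives, and it is proved not by quantitative smoothing but by a mass computation plus the rigidity of $\overline{K^+_\la}$ (Theorem~\ref{thm3}/Corollary~\ref{Cor1}), which in turn rests on Theorem~\ref{thm1}. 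So while Theorem~\ref{thm1} does ultimately enter, it does so through rigidity to identify limits of subsequences, not through the rate estimate applied to $\tilde\Psi_n$. Your proposal therefore needs to be replaced (or supplemented) at the decisive step by either the integration-by-parts/rigidity argument, or a genuinely different mechanism that exploits the zero mean of $\tilde\Psi_n$.
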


Note that Theorem \ref{thm4} assures the existence of Lyapunov exponents of $H$ with respect to $\mu$ for a good choice of $\mu'$. Thus if we consider $M$ to be compact set in $\mathbb{C}^k$, there are at most $(k+2)$ Lyapunov exponents of $H$. Let $\lambda_1$ be the largest one, then 
we have the following:
\begin{thm}\label{thm6}
$\lambda_1 \geq \max \{\log d,\lambda_\sigma\}$ where $\lambda_\sigma$ is the largest Lyapunov exponent of $\sigma$ with respect to $\mu'$.
\end{thm}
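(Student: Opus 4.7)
The plan is to exploit the block lower-triangular structure of the derivative. At any $(\la, z) \in M \times \mbb C^2$,
\[
DH_{(\la, z)} = \begin{pmatrix} D\si_\la & 0 \\ \pa_\la H_\la(z) & D_z H_\la(z) \end{pmatrix}
\]
on $T_\la M \oplus T_z \mbb C^2$. The subbundle $\{0\} \oplus T_z \mbb C^2$ is $DH$-invariant, and the induced action on the quotient $T_\la M$ is exactly $D\si$. A standard consequence for block-triangular cocycles (Oseledets applied to the invariant subbundle and to the quotient) is that the Lyapunov spectrum of $H$ with respect to $\mu$ is the disjoint union, counted with multiplicity, of the spectrum of the fibre cocycle $D_z H_\la(z)$ over $(H,\mu)$ and the spectrum of $D\si$ with respect to $(\si, \mu')$. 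In particular $\la_\si$ appears as an exponent of $H$, giving $\la_1 \ge \la_\si$ immediately.

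It remains to show $\la_1 \ge \log d$, for which it suffices to show that the larger of the two fibre exponents, call it $\chi_1^f$, satisfies $\chi_1^f \ge \log d$. This is exactly the Bedford--Smillie lower bound for a single H\'enon map, and the idea is to transplant that argument to the non-autonomous compositions $F_{n, \la} := H_{\si^{n-1}(\la)} \circ \cdots \circ H_\la$ that drive the fibre cocycle. Two pieces of structure make this feasible. First, the fibrewise pull-back identity $(H_\la)^* \mu^+_{\si(\la)} = d\, \mu^+_\la$ (the skew-product analog of Proposition~\ref{P3}) iterates to $(F_{n, \la})^* \mu^+_{\si^n(\la)} = d^n \mu^+_\la$, encoding $d^n$ expansion transverse to the laminar structure of $\mu^-_\la$. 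Second, for each $\la$ the fibre measure $\mu_\la = \mu^+_\la \wedge \mu^-_\la$ is the equilibrium measure of $K_\la$, so $h_{\mu_\la}(H_\la) = \log d$. Running the laminar-currents argument of Bedford--Smillie fibrewise --- picking local unstable discs for $\mu^-_\la$ through $\mu_\la$-a.e.\ $z$, applying $F_{n, \la}$, and reading off the transverse expansion factor $d^n$ from the pullback identity --- together with the uniform H\"older continuity of $G^\pm_\la$ from Proposition~\ref{P1}, converts the transverse mass estimate into
\[
\chi_1^f(\la, z) = \lim_{n \to \infty} \frac{1}{n} \log \norm{DF_{n, \la}(z)} \ge \log d
\]
for $\mu$-a.e.\ $(\la, z)$.

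The main obstacle is making this Pesin-laminar step rigorous in the non-autonomous setting, since the holonomy and unstable-chart constructions of Bedford--Smillie use iteration of a single map, whereas here every step is a different H\'enon map from $\{H_\la\}$. The uniform estimates already packaged into Propositions~\ref{P1}--\ref{P3}, in particular the uniform logarithmic growth and uniform H\"older continuity of $G^\pm_\la$ and the lower semi-continuity of $\la \mapsto J^\pm_\la$, are exactly what is needed to construct Lyapunov charts uniformly in $\la$, while compactness of $M$ and continuity of the coefficient functions handle the integrability required to invoke Oseledets. Combining the two lower bounds yields $\la_1 \ge \max\{\log d, \la_\si\}$.
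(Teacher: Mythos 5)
The $\lambda_1 \ge \lambda_\sigma$ half of your argument is sound and in fact cleaner than what the paper states: you observe that $\{0\}\oplus T_z\mathbb C^2$ is $DH$-invariant and that the quotient cocycle is $D\sigma$, which immediately gives $\lambda_\sigma$ as an exponent of $H$. (One can avoid the ``disjoint union of spectra'' theorem entirely: for a vector of the form $(v,0)\in T_\lambda M\oplus T_z\mathbb C^2$, $\|DH^n(v,0)\|\ge\|D\sigma^n v\|$, so the exponent of $(v,0)$ is already $\ge\lambda_\sigma$; this is what the paper means by ``choosing an appropriate vector.'')

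The $\lambda_1 \ge \log d$ half is a sketch, not a proof, and the route is genuinely different from the paper's. Two concrete issues. First, you write $h_{\mu_\lambda}(H_\lambda) = \log d$, but in the skew setting $H_\lambda$ does \emph{not} preserve $\mu_\lambda$; it maps $\mu_\lambda$ to $\mu_{\sigma(\lambda)}$, so this quantity is not defined and cannot be invoked. Second, the core step you propose --- transplanting the Bedford--Smillie laminar-currents argument fibrewise --- requires laminarity of $\mu_\lambda^-$ and a random Pesin theory with Lyapunov charts uniform in $\lambda$, none of which is established by Propositions~\ref{P1}--\ref{P3} (which give only H\"older regularity of $G^\pm_\lambda$ and semicontinuity of $J^\pm_\lambda$). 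You name this obstacle yourself but do not resolve it, so the inequality $\chi_1^f\ge\log d$ is asserted rather than proved.

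The paper avoids laminarity altogether. It fixes $v_p = \partial/\partial y$, writes $\int\frac1n\log\|DH^n(v_p)\|\,d\mu$ as a double integral over $M$ and the fibre, and on the fibre approximates $\mu_\lambda^-$ by $d^{-k}(H_{\sigma^{-k}(\lambda)}^{-1}\circ\cdots\circ H_{\sigma^{-1}(\lambda)}^{-1})^*[\,\{x=0\}\,]$ (the analogue of Theorem~\ref{thm1} for the backward sequence). After a change of variables this reduces the estimate to integrating $\log|\partial_y(H_\lambda^n)_2|$, restricted to a vertical line, against an equilibrium measure on a slice of a filled Julia set. The restriction is an explicit monic-up-to-constant polynomial in $y$ with leading coefficient $d^n$, so $\log|\cdot| = n\log d + \log|\text{monic}|$, and positivity of the equilibrium integral of $\log|\text{monic}|$ gives the bound. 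This computation is more elementary than the laminar-currents route and sidesteps precisely the Pesin-theoretic machinery your sketch would need to build.
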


The definition of $\mu$ shows that its support is contained in
\[
J = \ov{\bigcup_{\la \in M}( \{ \la \} \times J_{\la})}
\]
which is a compact set in $M \times \mbb C^2$. We may therefore consider $H$ as a self map of supp$(\mu)$ with invariant measure $\mu$. Let $h(H ; \mu)$ and $h(\si ; \mu')$ be the measure theoretic entropies  of $H$ with respect to $\mu$ and of $\si$ with respect to the invariant measure $\mu'$ respectively. By adapting the arguments of Bedford--Smillie \cite{BS3} and appealing to the Abramov--Rohlin theorem \cite{AR}, it is possible to obtain lower bounds for the entropies of $H$ that reflect the contribution of $\si$.

\begin{thm}\label{thm5}
With $H$, $\mu$ as above,
\[
h(H; \mu) \geq h(\si; \mu') + \log d.
\]
In particular, the topological entropies of $H, \si$ are related by  $h_{top}(H) \geq h_{top}(\si) + \log d$. 

\end{thm}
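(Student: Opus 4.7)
The plan is to apply the Abramov--Rohlin theorem to the skew product $H$ covering $\si$, and then to show that the resulting fiber entropy is at least $\log d$ by running the Bedford--Smillie computation from \cite{BS3} fiber by fiber.

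The projection $\pi : M \times \mbb C^2 \to M$ is a measure-theoretic factor: $\pi_* \mu = \mu'$ and $\pi \circ H = \si \circ \pi$. The Abramov--Rohlin formula \cite{AR} therefore gives
\[
h(H;\mu) \;=\; h(\si;\mu') \;+\; h_\mu(H \mid \pi),
\]
where $h_\mu(H \mid \pi)$ is the relative (fiber) entropy, reducing the theorem to $h_\mu(H \mid \pi) \geq \log d$. Formula $(1.5)$ is the Rokhlin disintegration $\mu = \int_M \mu_\la \, d\mu'(\la)$, so $h_\mu(H \mid \pi)$ can be written as an average over $\la$ of a fiber-wise entropy of the non-autonomous sequence $(H_{\si^n(\la)})_{n \geq 0}$ acting on the coherent family $(\mu_{\si^n(\la)})_{n \geq 0}$.

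The fiber-wise lower bound $h_\mu(H \mid \pi) \geq \log d$ comes from adapting the Bedford--Smillie argument that a single H\'{e}non map has entropy $\log d$ with respect to its equilibrium measure. The inputs they use are all available in the skew-product setting: (i) the local product structure $\mu_\la = \mu_\la^+ \wedge \mu_\la^-$; (ii) the exact expansion relation $(H_\la)^* \mu^+_{\si(\la)} = d \mu^+_\la$ recorded just before Theorem \ref{thm4}, which supplies the expansion rate $\log d$ along the unstable direction; and (iii) a generating partition assembled from cylinders adapted to the stable currents $\mu_\la^-$. The partition in (iii) can be produced uniformly in $\la$ thanks to the continuity of $\la \mapsto \mu_\la^{\pm}$ from Proposition \ref{P3} together with the locally uniform H\"{o}lder continuity of $G_\la^{\pm}$ from Proposition \ref{P1}. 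Running Bedford--Smillie's lower bound fiber by fiber and integrating against $\mu'$ then yields $h_\mu(H \mid \pi) \geq \log d$.

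The topological entropy statement follows by the variational principle: for any $\si$-invariant probability measure $\mu'$ on $M$, the measure $\mu$ constructed via $(1.5)$ is $H$-invariant with $h(H;\mu) \geq h(\si;\mu') + \log d$, and taking the supremum over $\mu'$ yields $h_{top}(H) \geq h_{top}(\si) + \log d$. The principal obstacle is step (iii) above: Bedford--Smillie's argument was written for iterates of a single self-map, and porting it to the non-autonomous situation requires checking that the geometric constants in their construction (diameters of cylinder pieces, mass estimates, H\"{o}lder moduli of $G_\la^{\pm}$) remain uniform along every orbit $\la, \si(\la), \si^2(\la), \ldots$. Compactness of $M$ together with the uniformity built into Propositions \ref{P1} and \ref{P3} is what makes this bookkeeping go through.
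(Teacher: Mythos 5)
Your overall architecture matches the paper's: both use the Abramov--Rohlin theorem to split $h(H;\mu)$ into $h(\si;\mu')$ plus a relative (fiber) entropy, and both aim to bound the fiber contribution below by $\log d$ via a Bedford--Smillie-style argument. Deducing the topological entropy inequality from the measure-theoretic one by the variational principle (over $\si$-invariant $\mu'$, using that each such $\mu'$ produces an $H$-invariant $\mu$ via $(1.5)$) is also a legitimate route, even though the paper instead establishes $h_{top}(H)\ge h_{top}(\si)+\log d$ directly by Smillie's method.

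The gap is in your step (iii) and in your diagnosis of the ``principal obstacle.'' You describe the fiber-wise lower bound as coming from ``a generating partition assembled from cylinders adapted to the stable currents $\mu_\la^-$'' and claim the bookkeeping reduces to uniformity of H\"older moduli and mass estimates, which compactness of $M$ supplies. That is not how the Bedford--Smillie bound works, and the uniformity you invoke is not the real difficulty. What the paper actually does (and what BS3 does for a single map) is: pull back a smooth $(1,1)$-form $\Theta = \tfrac{1}{2\pi}dd^c L$ through the fiber dynamics, wedge with a disc $[\mathcal D]$ to obtain probability measures $\rho_{n,\la} = d^{-n}[\mathcal D]\wedge\Theta_{n,\la}$, show that the Ces\`aro averages $\mu_{n,\la}=\frac1n\sum_j d^{-n}(H^{-1}_{\si^{-j}(\la)}\circ\cdots)^*(\alpha_{n,\si^{-j}(\la)})$ converge to $\mu_\la$, and then bound the conditional entropy of a partition below by
\[
H_{\rho_{n,\la}}\Big(\bigvee_{i=0}^{n-1}(H_\la^i)^{-1}\mathcal P\Big)\ge -\log C + n\log d - \log v_0(H,n,\la,\ep),
\]
where $v_0$ measures areas of images of pieces of the disc under $H_\la^n$. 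The term one must kill is $\frac1n\int_M\log v_0(H,n,\la,\ep)\,d\mu'$, and this is controlled by a \emph{random Yomdin-type volume growth theorem} (Kifer--Yomdin, \cite{YK}); the paper spends Step 1 of the preceding topological-entropy argument proving exactly such a variant, requiring $C^l$-bounds on the maps and an argument with rescalings $\psi_i^{-1}\circ F_i\circ\psi_{i-1}$. Your proposal never mentions the disc-volume-growth estimate or the Yomdin machinery, which is the genuinely nontrivial input; without it, ``running Bedford--Smillie fiber by fiber'' is not actually a proof.
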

This strengthens the weaker lower bound of $\log d$ obtained in \cite{PV}. It is also possible to adapt Smillie's arguments \cite{Sm} to obtain the aforementioned lower bound for the topological entropy  $h_{top}(H)$ directly as well.


\section{Random iterations of H\'{e}non maps}

For $R>0$, let
\begin{align*}
V_R^+ &= \big\{ (x, y) \in \mbb C^2 : \vert y \vert > \vert x \vert, \vert y \vert > R \big\},\\
V_R^- &= \big\{ (x, y) \in \mbb C^2 : \vert y \vert < \vert x \vert, \vert x \vert > R \big\}\text{ and }\\
V_R   &= \big\{ (x, y) \in \mbb C^2 : \vert x \vert, \vert y \vert \le R\}.
\end{align*}
The compactness of $M$ implies that for a sufficiently large $R>0$,
$$
H_\lambda(V_R^+)\subset V_R^+, \ \ H_\lambda(V_R^+\cup V_R)\subset V_R^+\cup V_R
$$
and 
$$
H_\lambda^{-1}(V_R^-)\subset V_R^-, \ \ H_\lambda^{-1}(V_R^-\cup V_R)\subset V_R^-\cup V_R
$$
for all $\lambda\in M$.
As in Lemma $2.1$ in \cite{PV}, it can be shown that  
$$
I_\Lambda^{\pm}=\mathbb{C}^2\setminus K_\Lambda^{\pm}=\bigcup_{n=0}^\infty {(H_{n,\Lambda}^\pm)}^{-1}(V_R^{\pm})
$$
for each $\Lambda\in X$.

\subsection{Proof of Proposition \ref{P1}}
\begin{proof}
Since $I^-$ is an attracting fixed point for each $H_\lambda$ and the correspondence $\lambda\mapsto H_\lambda$ is continuous, one can choose a suitable neighborhood of $I^-$ in $\mathbb{P}^2$, say $U$, such that $H_\lambda(Y)\subset \subset Y$ for all $\lambda\in M$ where $Y=\mathbb{P}^2\setminus \overline{U}$. Now start with a compact set $K$ in $\mathbb{C}^2$. Let $R$ be so large that $K\subset V_R\cup V_R^+$ and $H_\lambda(V_R\cup V_R^+)\subset V_R \cup V_R^+$ for all $\lambda\in M$. Define 
$$
v_\lambda(z)=\frac{1}{d}\log^+ \lVert H_\lambda(z)\rVert-\log^+\lVert z \rVert
$$
for $z\in V_R\cup V_R^+$. The proof of Proposition $1.1$ in \cite{PV} provides a constant $K>0$ such that $\lvert v_\lambda(z) \rvert < K$ for all $z\in V_R^+ \cup V_R $ and $\lambda\in M$. Consider $(Y,d_{FS})$ where $d_{FS}$ is the Fubini-Study metric on $Y$. Thus $Y$ is a metric space of finite diameter. Note that  each $v_\lambda$ and $H_\lambda$ are Lipschitz on $Y$ with  uniform Lipschitz coefficients $A$ and $L$ respectively. In particular, it is possible to choose $L$ to be $\sup_{\lambda\in M} L_\lambda$ where each $L_\lambda=\sup_{\mathbb{P}^2\setminus U}\lVert DH_\lambda \rVert$.   \\

\noindent
Now 
\begin{equation*}
G_{n+1,\Lambda}^+(z)-G_{n,\Lambda}^+(z) = \frac{1}{d^{n+1}} \log^+ \big \lVert H_{n+1,\Lambda}^+(z)\big \rVert-\frac{1}{d^{n}} \log^+ \big\lVert H_{n,\Lambda}^+(z)\big \rVert 
= \frac{1}{d^n} \big ( v_{\lambda_{n+1}}\circ H_{n,\Lambda}^+(z)\big ).
\end{equation*}
Thus 
\begin{equation}
\big \lvert G_{n+1,\Lambda}^+(z)-G_{n,\Lambda}^+(z)\big \rvert=d^{-n}\big\lvert  v_{\lambda_{n+1}}\circ H_{n,\Lambda}^+(z) \big\rvert \leq Kd^{-n} \label{R2}
\end{equation}
on $V_R\cup V_R^+$ for all $n\geq 1$ and for all $\Lambda\in X$. Therefore $G_{n,\Lambda}^+$ converges uniformly to a continuous function $G_\Lambda^+$ on any compact subset of $\mathbb{C}^2$. Also note that 
\[
d G_{n+1,\Lambda'}^+=G_{n,\Lambda}^+\circ H_\lambda
\]
for all $n\geq 1$, which yields $d G_{\Lambda'}^+=G_\Lambda^+ \circ H_\lambda$. That $G_\Lambda^{+}$ is a strictly positive pluriharmonic function on $\mathbb{C}^2\setminus K_\Lambda^{+}$ and vanishes precisely on $K_\Lambda^{+}$ follows by using arguments similar to those in Proposition $1.1$ in \cite{PV}.\\
\indent
Note that 
\[
G_\Lambda^+(z)=\log^+\lVert z \rVert+\sum_{n\geq 0} d^{-n}\big(v_{\lambda_{n+1}} \circ H_{n,\Lambda}^+(z)\big) 
\]
for all $\Lambda\in X$. Since $(Y,d_{FS})$ has finite diameter, it is sufficient to work with $a,b \in V_R\cup V_R^+$ with $d_{FS}(a,b)<<1$. Now
\begin{align}\label{R3}
&\Big\lvert\sum_{n\geq 0}d^{-n}v_{\lambda_{n+1}}\circ H_{n,\Lambda}^+(a)- \sum_{n\geq 0}d^{-n}v_{\lambda_{n+1}}\circ H_{n,\Lambda}^+(b) \Big\rvert \nonumber\\
&\leq \sum_{0\leq n \leq N-1} d^{-n}\big\lvert v_{\lambda_{n+1}}\circ H_{n,\Lambda}^+(a)- v_{\lambda_{n+1}}\circ H_{n,\Lambda}^+(b)\big\rvert 
+ \sum_{n \geq N} d^{-n}\big\lvert v_{\lambda_{n+1}}\circ H_{n,\Lambda}^+(a)- v_{\lambda_{n+1}}\circ H_{n,\Lambda}^+(b)\big\rvert \nonumber\\
&\leq  A \sum_{0\leq n \leq N-1} d^{-n} \big\lvert H_{n,\Lambda}^+(a)-H_{n,\Lambda}^+(b)\big\rvert + 2K \sum_{n\geq N} d^{-n} 
\leq A d_{FS}(a,b) \sum_{0\leq n \leq N-1} d^{-n}L^n + K'd^{-N}\nonumber\\
\end{align}
where $K'={2Kd}/{(d-1)}$. If $L<d$, the last sum in (\ref{R3}) is at most $ d_{FS}(a,b)+ d^{-N}$ upto a constant. So for a given $0<\beta<1$, if we choose 
$$
N> -\frac{\beta \log d_{FS}(a,b)}{\log d},
$$
then the sum in (\ref{R3}) is $\lesssim {d_{FS}(a,b)}^\beta$. Thus in this case, $G_\Lambda^+$ is locally H\"{o}lder continuous in $\mathbb{C}^2$ for any $0<\beta <1$. For $L\geq d$, the last sum in (\ref{R3}) is at most $d_{FS}(a,b){(Ld^{-1})}^N+d^{-N}$. Now if we choose 
$$
-\frac{\log d_{FS}(a,b)}{\log L}\leq N <-\frac{\log d_{FS}(a,b)}{\log L}+1,
$$
the sum in (\ref{R3}) is $ \lesssim {d_{FS}(a,b)}^{{\log d}/{\log L}}$ since $d^{-N}< {d_{FS}(a,b)}^{{\log d}/{\log L}}$. Therefore $G_\Lambda^+$ is locally H\"{o}lder continuous in $\mathbb{C}^2$ for any $0<\beta <{\log d}/{\log L}$. So for each $\Lambda\in X$, $G_\Lambda^+$ is $\beta$-H\"{o}lder continuous for any $\beta$ such that $0<\beta< \min\{1,{\log d}/{\log L}\}$. A similar argument shows that $G_\Lambda^-$ is H\"{o}lder continuous. 

\medskip

That the correspondence $\Lambda \mapsto G_\Lambda^{+}$ is continuous follows by using arguments similar to those in Proposition $1.1$ in \cite{PV}.

\end{proof}

\subsubsection{Proof of Proposition \ref{P2}}  
\begin{proof}
Pick a $z\in \mathbb{C}^2$ and let $\Delta$ be a small disk centered at $z$. Then 
\begin{multline*}
EG^+(z)=\int_X G_\Lambda^+ (z)
\leq \int_X \frac{1}{\lvert \partial \Delta \rvert} \int_{\partial \Delta} G_\Lambda^+ (\xi) 
= \frac {1}{\lvert\partial \Delta \rvert} \int_{\partial \Delta} \int_X G_\Lambda^+(\xi)
= \frac{1}{\lvert\partial \Delta \rvert} \int_{\partial \Delta} EG^+(\xi).
\end{multline*}
The first inequality follows due to the fact that $G_\Lambda^+$ is subharmonic in $\mathbb{C}^2$ for each $\Lambda\in X$ and the next equality is obtained by applying Fubini's theorem to the continuous function $G^+: X \times \mathbb{C}^2 \ra \mathbb{R}$ defined by $(\Lambda, z) \mapsto G_\Lambda^+(z)$. This shows that $EG^+$ is plurisubharmonic. That for a fixed compact set  in $\mathbb{C}^2$, a fixed H\"{o}lder coefficient and a fixed H\"{o}lder exponent work for each $G_\Lambda^+$ implies the local H\"{o}lder continuity and in particular, the continuity of the average Green function $EG^+$ in $\mathbb{C}^2$. A similar statement is valid for $EG^{-}$.  

\medskip

\no
For $z\in \bigcap_{\Lambda\in X} K_\Lambda^+$, it is evident that $EG^+(z)=0$. To see the converse, let 
\[
EG^+(z)=\int_{\Lambda\in X} G_\Lambda^+(z)=0
\] 
for a $z\in \mathbb{C}^2$. Since the correspondence $\Lambda\mapsto G_\Lambda^+(z)$ is continuous for a fixed $z\in \mathbb{C}^2$ and $G_\Lambda^+$ is positive on $\mathbb{C}^2$ for all $\Lambda\in X$, we have $G_\Lambda^+(z)=0$ for all $\Lambda\in X$. Hence $z\in \bigcap_{\Lambda} K_\Lambda^+$. Further, it is not difficult to observe that $EG^{\pm}$ is pluriharmonic outside $\overline{\bigcup_{\Lambda}K_\Lambda^\pm}$.
\end{proof}

\subsection{Proof of Proposition \ref{P3}} 
\begin{proof}
For a test form $\varphi$ in $\mathbb{C}^2$, Note that
\begin{equation*}
\langle \mu^\pm,\varphi\rangle=\langle dd^c{EG}^{\pm},  \varphi\rangle
= \int_X\langle G_\Lambda^{\pm}, dd^c \varphi\rangle 
=\int_X \langle dd^c G_\Lambda^{\pm},\varphi\rangle
= \int_X \langle \mu_\Lambda^{\pm},\varphi\rangle
=\Big \langle \int_X \mu_\Lambda^{\pm},\varphi \Big\rangle.
\end{equation*}

Thus $\mu^{\pm}=\int_X \mu_\Lambda^{\pm}$.

\medskip

From Proposition \ref{P1}, it follows that  $G_\Lambda^{\pm}\circ H_\lambda^{\pm 1}=d G_{\lambda\Lambda}^{\pm}$
which in turn gives 
$$
{(H_\lambda^{\pm 1})}^*(dd^c G_\Lambda^\pm)= d (dd^c G_{\lambda\Lambda}^\pm).
$$
Hence ${(H_\lambda^{\pm 1})}^* \mu_\Lambda^{\pm}=d \mu_{\lambda\Lambda}^\pm$ for all $\lambda\in M$ and for all $\Lambda\in X$.

\medskip

That for a $z\in \mathbb{C}^2$, 
$$
\int_{M} {EG}^{\pm}\circ H_\lambda^{\pm}(z)d\nu(\lambda)= d {(EG)}^{\pm}(z)
$$
follows from the fact that $G_\Lambda^{\pm}\circ H_\lambda^{\pm 1}=d G_{\lambda \Lambda}^{\pm}$ for all $\lambda\in M$ and for all $\Lambda\in X$ .

\medskip

To prove the next assertion observe that
\begin{align*}
&\Big\langle \int_{M} {(H_\lambda^{\pm 1})}^* \mu^{\pm},\varphi \Big\rangle=\int_{M} \langle dd^c ({EG}^{\pm}\circ H_\lambda^{\pm 1 }),\varphi \rangle
=\int_{M}\langle {EG}^{\pm}\circ H_\lambda^{\pm 1 }, dd^c \varphi \rangle\\
&=\Big \langle \int_{M} {EG}^{\pm}\circ  H_\lambda^{\pm 1 },dd^c \varphi\Big\rangle
= \langle d {(EG)}^{\pm},dd^c \varphi\rangle
= d \langle dd^c ({EG}^{\pm}),\varphi \rangle
= d \langle \mu^{\pm},\varphi\rangle.
\end{align*}
Therefore $\int_{M} {(H_\lambda^{\pm 1})}^* \mu^{\pm}=d \mu^{\pm}$.

\medskip

That the support of $\mu_\Lambda^\pm$ is $J_\Lambda^\pm$ and the correspondence $\Lambda\mapsto J_\Lambda^\pm$ is lower semi-continuous can be shown as in Proposition $1.2$ in \cite{PV}.

\medskip

Since $\mu^\pm=\int_{X} \mu_\Lambda^\pm$, we have ${ \rm supp} (\mu^{\pm})\subset \overline{\bigcup_{\Lambda\in X} J_\Lambda^\pm }$. To prove the other inclusion, first note that $\mu_\Lambda^{\pm}$ vary continuously in $\Lambda$. Therefore, if some $\mu_{\Lambda_0}^{\pm}$ has positive mass in some open set $\Omega\subset\mathbb{C}^2$, then there exists a neighborhood $U_{\Lambda_0}\subset X$ of $\Lambda_0$ such that $\mu_\Lambda^\pm$ has nonzero mass in $\Omega$ for all $\Lambda\in U_{\Lambda_0}$ and thus $\mu^{\pm}$ also has nonzero mass in $\Omega$. This completes the proof. 
\end{proof}

\section*{Convergence to the random stable Green current}

\subsection{Proof of Theorem \ref{thm1}}

\begin{proof}
Since the correspondence $\lambda\mapsto H_\lambda$ is continuous, we can assume that $U$ satisfies that $H_\lambda(U)\subset \subset U$ for all $\lambda\in M$. Let $\varphi$ be a $(1,1)$-form in $\mathbb{P}^2$. Since $\varphi$ is of class $C^2$, $dd^c \varphi$ is a continuous form of maximal degree. Thus we can assume that the signed measure given by $dd^c \varphi$ has no mass on a set of volume zero and in particular, on the hyperplane at infinity. Hence multiplying $\varphi$ with a suitable constant, we can assume 
${\lVert dd^c \varphi \rVert}_\infty \leq 1$. Thus $\gamma=dd^c \varphi$ is a complex measure in $\mathbb{C}^2$ with mass less than or equal to $1$.   
 Define $\gamma_{n}={(H_{\lambda_n}\circ \cdots\circ H_{\lambda_1})}_* (\gamma)$ for all $n\geq 1$. Note that $\gamma_n$ has the same mass as $\gamma$ for all $n\geq 1$ since $H_\lambda$ is automorphism of $\mathbb{C}^2$ for all $\lambda\in M$. Let ${\gamma_n}'$ and ${\gamma_n}''$ be the restrictions of $\gamma_n$ to $\mathbb{P}^2 \setminus U$ and $U$ respectively. Clearly, $\lVert {\gamma_n}' \rVert \leq 1$ on $\mathbb{P}^2\setminus U$ and $ \lVert{\gamma_n}''\rVert \leq 1$ on $U$. Observe that due to the choice of $U$, $H_\lambda^{-1}$ defines a map from $\mathbb{P}^2\setminus U$ to $\mathbb{P}^2\setminus U$ with $C^1$ norm bounded by some $L>0$ for all $\lambda\in M$. So the $C^1$ norm of 
 $(H_{\lambda_1}^{-1}\circ\cdots\circ H_{\lambda_n}^{-1})$ is bounded by $L^n$ on $\mathbb{P}^2\setminus U$ for all $n\geq 1$ and consequently, we have  ${\lVert\gamma_n' \rVert}_{\infty}\leq L^{4n}$ for all $n\geq 1$.\\
 
\medskip 

For each $n\geq 1$, define:
\begin{equation*}
 q_{\Lambda_n}^+ := \frac{1}{2\pi}G_{\Lambda_n}^+ - \frac{1}{2\pi} \log {(1+ {\lVert z \rVert}^2)}^{\frac{1}{2}} .
\end{equation*}
 Note that $ q_{\Lambda_n}^+$ is a quasi-potential of $ \mu_{\Lambda_n}^+$ for all $n\geq 1$. Let $v_{\Lambda_n} := u_n -  q_{\Lambda_{n+1}}^+$.
By Proposition $1.1$ in \cite{PV} (See $(2.15)$ therein), one can show that $q_{\Lambda_n}^+$ is bounded on $U$ by a fixed constant for all $n\geq 1$. This implies that there exists $A_1>0$ such that $\lvert v_{\Lambda_n}\rvert \leq A_1$ on $U$ for all $n\geq 1$. Now observe that
\begin{align}
&\big\langle  d^{-n} {(H_{\lambda_n}\circ \cdots\circ H_{\lambda_1} )}^* ({S_n})-\mu_\Lambda^+, \varphi \big\rangle \nonumber\\
&=\big\langle d^{-n} {(H_{\lambda_n}\circ \cdots\circ H_{\lambda_1} )}^* ({S_n})- d^{-n} {(H_{\lambda_n}\circ \cdots\circ H_{\lambda_1} )}^* (\mu_{\Lambda_{n+1}}^+),\varphi \big\rangle \nonumber \\
&=d^{-n} \big\langle {(H_{\lambda_n}\circ \cdots\circ H_{\lambda_1} )}^* (dd^c (v_{\Lambda_n})), \varphi \big\rangle 
=d^{-n} \big\langle v_{\Lambda_n}, {(H_{\lambda_n}\circ \cdots\circ H_{\lambda_1} )}_*(dd^c \varphi) \big\rangle \nonumber\\
&=d^{-n} \big\langle v_{\Lambda_n}, \gamma_n  \big\rangle  
= d^{-n} \big\langle \gamma_n',  v_{\Lambda_n} \big\rangle + d^{-n} \big\langle \gamma_n'', v_{\Lambda_n} \big\rangle 
\label{R4}
\end{align}
\indent
Consider the family $\{v_{\Lambda_n}\}_{n\geq 1}$. Note that $dd^c (v_{\Lambda_n})= {S_n}- \mu_{\Lambda_{n+1}}^+$. This implies that 
${\lVert dd^c (v_{\Lambda_n})\rVert}_*$ is uniformly bounded by a fixed constant for all $n\geq 1$ since ${S_n}$ and $\mu_{\Lambda_n}^+$ both have mass $1$ for all $n\geq 1$. Here 
$$
{\lVert dd^c (v_{\Lambda_n}) \rVert}_*:=\inf(\lVert S_n^+ \rVert-\lVert S_n^-\rVert)
$$
where the infimum is taken over all positive closed $(1,1)$ currents $S_n^{\pm}$ such that $v_{\Lambda_n}=S_n^+-S_n^-$. Hence by Lemma $3.11$ in \cite{DS}, it follows that $\{v_{\Lambda_n}\}_{n\geq 1}$ is a bounded subset in DSH$(\mathbb{P}^2)$. Since $ \lVert\gamma_n'\rVert \leq 1$ and ${\lVert\gamma_n'\rVert}_\infty \leq L^{4n}$, Corollary {3.13} in \cite{DS} shows that
\begin{equation}
\big \lvert d^{-n} \big\langle \gamma_n', v_{\Lambda_n}\big \rangle \big \rvert \leq d^{-n} c (1+\log^+  {L^{4n}}) \lesssim nd^{-n}.
\end{equation} 
Now the second term in (\ref{R4}) is $O(d^{-n})$ since $\lVert \gamma_n'' \rVert \leq 1$ and $\lvert v_{\Lambda_n}\rvert \leq A_1$ on $U$. This estimate along with (\ref{R4}) gives 
\begin{equation}
\big\lvert \big\langle  d^{-n} {(H_{\lambda_n}\circ \cdots\circ H_{\lambda_1} )}^* ({S_n})-\mu_\Lambda^+, \phi \big\rangle \big\rvert 
\leq And^{-n} {\lVert \phi \rVert}_{C^1}.\label{uni}
\end{equation}
\end{proof}

\begin{rem}
It is clear from (\ref{uni}) and Lemma $3.11$ in \cite{DS} that the constant $A$ does not depend on the $\Lambda$ that we start with. In particular, it shows that for given currents ${\{S_n\}}_{n\geq 1}$ as prescribed before $d^{-n} {(H_{\lambda_n}\circ \cdots\circ H_{\lambda_1} )}^*(S_n)$ converges uniformly to $\mu_\Lambda^+$  for all $\Lambda\in X$  in the weak sense of currents.
\end{rem}

\subsection{Proof of Theorem \ref{thm2}}
\begin{proof}
First note that 
\begin{equation*}
\Theta^n(S_n)=\int_{M^n} {\frac{{(H_{n,\Lambda}^+)}^*(S_n)}{d^n}}.
\end{equation*}
For a test form $\varphi$ on $\mathbb{P}^2$, we have
\begin{eqnarray}
\big\langle \Theta^n(S_n),\varphi \big\rangle &=& \int_{M^n} \Big\langle \frac{{(H_{n,\Lambda}^+)}^*(S_n)}{d^n},\varphi \Big\rangle \nonumber 
= \int_{X} \Big\langle \frac{{(H_{n,\Lambda})}^*(S_n)}{d^n},\varphi \Big\rangle.
\end{eqnarray}
For each $n\geq 1$, we define $F_n: X \ra \mathbb{C}$ as follows:
$$
F_n(\Lambda):=  \Big\langle \frac{{(H_{n,\Lambda})}^*(S_n)}{d^n},\varphi \Big\rangle.
$$ 
By Theorem \ref{thm1}, 
$$
F_n(\Lambda) \ra F(\Lambda)=\langle \mu_\Lambda^+,\varphi\rangle
$$
for all $\Lambda\in X$. Note that each $F_n$ and $F$ are integrable. Also we have $\lvert F_n(\Lambda)\rvert \lesssim {\lVert \varphi \rVert}_\infty$ for all $n\geq 1$ and for all $\Lambda\in X$. By the dominated convergence theorem 
\begin{eqnarray*}
\big\langle\Theta^n(S_n),\varphi \big\rangle =\int_X F_n &\ra& \int_X F 
= \int_X \big\langle \mu_\Lambda^+,\varphi \big\rangle
= \langle \mu^+,\varphi \rangle.
\end{eqnarray*}
This completes the proof.
\end{proof}

\subsection{Proof of Theorem \ref{thm3}}
\begin{proof}
Fix $\Lambda\in X$ and let $S$ be a positive closed $(1,1)$-current of mass $1$ with support in $\overline{K_\Lambda^+}$. We show that $S=\mu_\Lambda^+$.  For each $n\geq 1$, define 
$$
S_{n,\Lambda}=d^n (H_{\lambda_n}\circ \cdots\circ H_{\lambda_1})_* S
$$
on $\mathbb{C}^2$. Note that each $S_{n,\Lambda}$ is a positive closed $(1,1)$-current on $\mathbb{C}^2$ with support in $K_{\Lambda_n}^+$. Therefore it can be extended through the hyperplane by $0$ as a positive closed $(1,1)$-current on $\mathbb{P}^2$. Now since 
$$
{(H_{\lambda_n}\circ \cdots\circ H_{\lambda_1})}^*{(H_{\lambda_n}\circ \cdots\circ H_{\lambda_1})}_* (S_{\Lambda,n})=S
$$  
on $\mathbb{C}^2$, we have
\begin{equation*}
d^{-n}{(H_{\lambda_n}\circ \cdots\circ H_{\lambda_1})}^* (S_{n,\Lambda})=S
\end{equation*}
on $\mathbb{C}^2$ and thus
\begin{equation}
d^{-n}{(H_{\lambda_n}\circ \cdots\circ H_{\lambda_1})}^* (S_{n,\Lambda})=S \label{R5}
\end{equation}
on $\mathbb{P}^2$. As a current on $\mathbb{P}^2$ each $S_{n,\Lambda}$ vanishes in a neighborhood of $I^-$  and is of mass $1$. Since for each $n\geq 1$, as a current on $\mathbb{C}^2$, $S_{\Lambda,n}$ has support in $K_{\Lambda_n}^+$, we have ${\rm{supp}}(S_{n,\Lambda})\cap \overline{V_R^+}=\phi$ for sufficiently large $R>0$. By Proposition $8.3.6$ in \cite{MNTU}, for each $n\geq 1$ there exists $u_{n,\Lambda}$ such that 
$$
S_{n,\Lambda}=c_{n,\Lambda} dd^c (u_{n,\Lambda})
$$ 
where $u_{n,\Lambda}(x,y)-\log\lvert y \rvert$ is a bounded pluriharmonic function  on $\overline{V}_R^+$ and $c_{n,\Lambda}>0$. Hence ${(S_{n,\Lambda})}_{n\geq 1}$ satisfies the required hypothesis of Theorem \ref{thm1} and thus we get  
$$
d^{-n}{(H_{\lambda_n}\circ \cdots\circ H_{\lambda_1})}^* (S_{n,\Lambda})\ra \mu_\Lambda^+
$$
in the sense of currents as $n\ra \infty$. Therefore by Theorem \ref{R5}, we have $S=\mu_\Lambda^+$.
\end{proof}

\begin{cor}\label{Cor1}
For each $\Lambda\in X$, the random Julia set $J_\Lambda^+=\partial K_\Lambda^+$ is rigid.
\end{cor}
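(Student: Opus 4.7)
The plan is to deduce this corollary essentially immediately from Theorem \ref{thm3} together with the identification $\mathrm{supp}(\mu_\Lambda^+) = J_\Lambda^+$ from Proposition \ref{P3}. The key observation is the set-theoretic inclusion $J_\Lambda^+ = \partial K_\Lambda^+ \subset K_\Lambda^+ \subset \overline{K_\Lambda^+}$ in $\mathbb{P}^2$, where the second inclusion uses that $K_\Lambda^+$ is closed in $\mathbb{C}^2$ and that the closure in $\mathbb{P}^2$ only adds at most the point $I^+$.

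Given this inclusion, I would argue as follows. Let $S$ be a non-zero positive closed $(1,1)$-current on $\mathbb{P}^2$ with $\mathrm{supp}(S) \subset J_\Lambda^+$, and set $c = \|S\| > 0$ for its mass. Then $c^{-1} S$ is a positive closed $(1,1)$-current of mass $1$ whose support is contained in $\overline{K_\Lambda^+}$. By Theorem \ref{thm3}, such a current is unique, namely $c^{-1} S = \mu_\Lambda^+$, so $S = c\, \mu_\Lambda^+$. Hence any non-zero positive closed $(1,1)$-current supported on $J_\Lambda^+$ is a positive multiple of $\mu_\Lambda^+$, which is precisely the rigidity of $J_\Lambda^+$ in the sense of Dinh--Sibony.

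Finally, I would note that the conclusion is non-vacuous: since Proposition \ref{P3} asserts $\mathrm{supp}(\mu_\Lambda^+) = J_\Lambda^+$, the random Green current $\mu_\Lambda^+$ itself is a non-trivial positive closed $(1,1)$-current supported on $J_\Lambda^+$, so the family of such currents is exactly the ray $\{c\,\mu_\Lambda^+ : c \geq 0\}$. There is no real obstacle here: the whole content of rigidity is already packaged into Theorem \ref{thm3}, and the corollary only requires observing that shrinking the support from $\overline{K_\Lambda^+}$ down to $J_\Lambda^+$ cannot introduce new currents but still retains $\mu_\Lambda^+$ itself.
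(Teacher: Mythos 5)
Your proof is correct and is exactly the intended deduction: the paper gives no explicit argument for the corollary and expects precisely this chain — $J_\Lambda^+ \subset K_\Lambda^+ \subset \overline{K_\Lambda^+}$ in $\mathbb{P}^2$, apply Theorem~\ref{thm3} to the normalized current, and invoke $\mathrm{supp}(\mu_\Lambda^+) = J_\Lambda^+$ from Proposition~\ref{P3} to see the statement is non-vacuous.
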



\section{Skew products of H\'{e}non maps: Mixing properties}
Bedford-Smillie have shown that for a single H\'{e}non map $H$, $T$ a positive closed current on a domain $\Omega\subset \mathbb{C}^2$ and $\psi$ a test function on $\Omega$ with $\rm{supp}(\psi) \cap \partial T=\phi$, the sequence $d^{-n}{(H^{n})}^*(\psi T)$ converges to $c\mu^+$ where $c=\int \psi T \wedge \mu^-$. We begin with an analogue of this for skew products of H\'{e}non maps.

\begin{lem}\label{G le1}
For $\lambda\in M$ and for a function $\psi \in C_0(M\times \mathbb{C}^2)$, we have
\begin{equation*}
\frac{1}{d^n}{(H_\lambda^n)}^*(\psi_{\sigma^n(\lambda)} \mu_{\sigma^n(\lambda)}^+ )-\langle
\mu_{\sigma^n(\lambda)},\psi_{\sigma^n(\lambda)}\rangle \mu_\lambda^+ \ra 0
\end{equation*}
as $n\ra \infty$ on $\mathbb{C}^2$, where $\psi_\lambda$ is the restriction of $\psi$ to the fiber $\{\lambda\}\times \mathbb{C}^2$.
\end{lem}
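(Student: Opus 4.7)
The plan is to combine the invariance of $\mu^+$ from Proposition \ref{P3} with the uniform exponential convergence of Theorem \ref{thm1}, reducing the statement to a cancellation forced by the definition of $c_n := \langle\mu_{\si^n(\la)},\psi_{\si^n(\la)}\rangle$. The starting observation is that iterating the skew-product form of $(H_\la)^\ast\mu^+_{\si(\la)} = d\,\mu^+_\la$ gives $(H_\la^n)^\ast\mu^+_{\si^n(\la)} = d^n\,\mu^+_\la$. Since $H_\la^n$ is a biholomorphism of $\mbb C^2$ and pullback distributes over the product $\psi_{\si^n(\la)}\mu^+_{\si^n(\la)}$, this already yields
$$\frac{1}{d^n}(H_\la^n)^\ast\big(\psi_{\si^n(\la)}\mu^+_{\si^n(\la)}\big) = (\psi_{\si^n(\la)}\circ H_\la^n)\,\mu^+_\la.$$
The lemma is therefore equivalent to the weak convergence
$$R_n := \big[(\psi_{\si^n(\la)}\circ H_\la^n) - c_n\big]\,\mu^+_\la \longrightarrow 0$$
on $\mbb C^2$.

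I would test $R_n$ against a smooth compactly supported $(1,1)$-form $\varphi$ and push forward by the biholomorphism $H_\la^n$, using the companion invariance $(H_\la^n)_\ast\mu^+_\la = d^{-n}\mu^+_{\si^n(\la)}$ (also from Proposition \ref{P3}) to obtain
$$\langle R_n,\varphi\rangle = d^{-n}\int\big(\psi_{\si^n(\la)} - c_n\big)\,(H_\la^{-n})^\ast\varphi \wedge \mu^+_{\si^n(\la)},$$
where $H_\la^{-n} := (H_\la^n)^{-1}$. Decomposing $\varphi = a\,\omega_{FS} + dd^c\rho$ on $\mbb P^2$ via the $dd^c$-lemma, with $\rho$ smooth and bounded, the $dd^c\rho$-piece contributes $dd^c\big(d^{-n}\rho\circ H_\la^{-n}\big)$, yielding an $O(d^{-n})$ term after pairing with $(\psi_{\si^n(\la)} - c_n)\mu^+_{\si^n(\la)}$ thanks to the boundedness of $\rho$ and the uniform bound $|\psi_{\si^n(\la)} - c_n| \le 2\,\|\psi\|_\infty$. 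For the $\omega_{FS}$-piece, the \emph{backward} analog of Theorem \ref{thm1} shows that $d^{-n}(H_\la^{-n})^\ast\omega_{FS}$ converges exponentially to the corresponding random unstable Green current, so $d^{-n}(H_\la^{-n})^\ast\omega_{FS}\wedge \mu^+_{\si^n(\la)}$ approaches the equilibrium measure $\mu_{\si^n(\la)}$ up to an $O(n\,d^{-n})$ error. Since $\int(\psi_{\si^n(\la)} - c_n)\,d\mu_{\si^n(\la)} = 0$ by the very choice of $c_n$, the leading contributions cancel and $\langle R_n,\varphi\rangle = O(n\,d^{-n}) \to 0$.

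The main obstacle is the analysis of $d^{-n}(H_\la^{-n})^\ast\varphi$: the parameter $\si^n(\la)$ drifts with $n$, and the backward analog of Theorem \ref{thm1} must be invoked \emph{uniformly} along this varying orbit. The enabling feature is that the constant appearing in Theorem \ref{thm1} does not depend on the parameter sequence $\La$ (as the remark following that theorem explicitly records); together with the compactness of $M$ and the continuity of $\la \mapsto \mu^\pm_\la$, this provides the uniform exponential decay needed to justify the cancellation above.
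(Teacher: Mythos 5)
Your opening reduction is correct and matches the spirit of the paper: using $(H_\la^n)^\ast\mu^+_{\si^n(\la)} = d^n\mu^+_\la$ to rewrite the first term as $(\psi_{\si^n(\la)}\circ H_\la^n)\,\mu^+_\la$, and then recognizing that the whole statement amounts to $R_n := [(\psi_{\si^n(\la)}\circ H_\la^n) - c_n]\,\mu^+_\la \ra 0$. Your treatment of the $\omega_{FS}$-piece is, modulo phrasing, exactly the paper's mass computation in \eqref{diff}--\eqref{C1}: pairing with $\omega_{FS}$ and pushing forward by $H_\la^n$ produces $d^{-n}\int \psi_{\si^n(\la)}\,\mu^+_{\si^n(\la)}\wedge (H_\la^{-n})^\ast\omega_{FS}$, and the uniform convergence $G^-_{n,\si^n(\la)}\ra G^-_{\si^n(\la)}$ (the ``backward'' analogue of Theorem \ref{thm1} along a $\la$-drifting orbit, uniform by the remark after Theorem \ref{thm1}) forces this toward $\langle\mu_{\si^n(\la)},\psi_{\si^n(\la)}\rangle = c_n$. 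So far so good.

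The gap is the step where you invoke the $dd^c$-lemma to write an arbitrary test form $\varphi$ as $a\,\omega_{FS}+dd^c\rho$. That decomposition is available only for \emph{closed} $(1,1)$-forms (indeed $a$ is a cohomological quantity, $a = \int_{\mbb P^2}\varphi\wedge\omega_{FS}$), whereas to establish weak convergence of currents one must test against all smooth compactly supported $(1,1)$-forms, most of which are not closed. The current $R_n$ itself is \emph{not} closed --- it is $f_n\,\mu^+_\la$ with $f_n = (\psi_{\si^n(\la)}\circ H_\la^n) - c_n$ merely continuous, so $dR_n = df_n\wedge\mu^+_\la \ne 0$ in general --- and convergence of its pairings against closed forms (equivalently, of its mass and of $\langle R_n, dd^c\rho\rangle$) does not determine its weak limit. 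So your argument establishes $\langle R_n,\varphi\rangle\ra 0$ only on the subspace of closed test forms, which is strictly weaker than what the lemma asserts.

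The paper closes precisely this gap by a different mechanism: after the mass computation \eqref{C1}, it normalizes by $c_n$ (handling the degenerate case $c_n = 0$ separately), shows that any weak limit point $\gamma$ of $C_n = c_n^{-1}d^{-n}(H_\la^n)^\ast(\psi_{\si^n(\la)}\mu^+_{\si^n(\la)})$ is a \emph{closed} positive $(1,1)$-current of mass $1$ supported in $\ov{K_\la^+}$ (via Step 2 of Theorem 1.3 in \cite{PV}), and then applies the rigidity statement Theorem \ref{thm3} / Corollary \ref{Cor1} to conclude $\gamma = \mu^+_\la$. Rigidity is the ingredient that replaces ``testing against all forms''; your proposal never invokes it, and I do not see how to make the $dd^c$-lemma route reach the full weak convergence without it. A secondary (fixable) issue: you pair $(\psi_{\si^n(\la)} - c_n)$ with $dd^c(\rho\circ H_\la^{-n})\wedge\mu^+_{\si^n(\la)}$ and wish to integrate by parts to exploit the boundedness of $\rho$, but $\psi$ is only assumed continuous; one would need to first approximate $\psi$ in $C_0$ by $C^2$ functions, which is routine but should be said.
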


\begin{proof}
Without loss of generality, we consider $\psi \geq 0$. Observe that  
\begin{eqnarray}\label{diff}
&&\Big \lVert \frac{1}{d^n} {(H_\lambda^n)}^*(\psi_{\sigma^n(\lambda)}\mu_{\sigma^n(\lambda)}^+)\Big \rVert-\langle \mu_{\sigma^n(\lambda)},\psi_{\sigma^n(\lambda)}\rangle \nonumber\\ 
&=& \frac{1}{2\pi{d^n}} \int_{\mathbb{C}^2} {(H_\lambda^n)}^* (\psi_{\sigma^n(\lambda)} \mu_{\sigma^n(\lambda)}^+ )\wedge dd^c \log {(1+{\lVert z\rVert}^2)}^{\frac{1}{2}}-\langle \mu_{\sigma^n(\lambda)},\psi_{\sigma^n(\lambda)}\rangle\nonumber\\
&=& \frac{1}{2\pi}\int_{\mathbb{C}^2} (\psi_{\sigma^n(\lambda)} \mu_{\sigma^n(\lambda)}^+ ) \wedge (dd^c {G}_{n,\sigma^n(\lambda)}^-)-\langle \mu_{\sigma^n(\lambda)},\psi_{\sigma^n(\lambda)}\rangle.
\end{eqnarray}
The last equality holds since  $\log {(1+{\lVert z\rVert}^2)}^{\frac{1}{2}}\sim \log^+\lVert z \rVert$ in $\mathbb{C}^2$. The proof of Proposition $1.1$ in \cite{PV} shows that ${G}_{n,\lambda}^-$ converges uniformly to ${G}_\lambda^-$ as $n\ra \infty$ on compact subsets of $\mathbb{C}^2$ and the convergence is uniform in $\lambda$. Thus the last term in (\ref{diff}) which is equal to
\begin{eqnarray}\label{C1}
\frac{1}{2\pi}\int_{\mathbb{C}^2} (\psi_{\sigma^n(\lambda)} \mu_{\sigma^n(\lambda)}^+ ) \wedge (dd^c {G}_{n,\sigma^n(\lambda)}^-) -\frac{1}{2\pi}\int_{\mathbb{C}^2} (\psi_{\sigma^n(\lambda)} \mu_{\sigma^n(\lambda)}^+ ) \wedge (dd^c {G}_{\sigma^n(\lambda)}^-)
\end{eqnarray}
tends to zero as $n\ra \infty$. 

\medskip
 
Consider the currents  
\begin{equation}\label{proof}
T_n= \frac{1}{d^n}{(H_\lambda^n)}^*(\psi_{\sigma^n(\lambda)} \mu_{\sigma^n(\lambda)}^+ )-\langle
\mu_{\sigma^n(\lambda)},\psi_{\sigma^n(\lambda)}\rangle \mu_\lambda^+
\end{equation}
for $n\geq 1$. We will show that $T_n \ra 0$ as $n\ra \infty$. Let $A=\big\{n\geq 0: \langle
\mu_{\sigma^n(\lambda)},\psi_{\sigma^n(\lambda)}\rangle=0 \big\}$. Then it follows by (\ref{C1}) that any subsequence of $\{T_n\}$, which corresponds to the set $A$, tends to $0$ as $n\ra \infty$. So to prove (\ref{proof}), we may assume that $\langle\mu_{\sigma^n(\lambda)},\psi_{\sigma^n(\lambda)}\rangle \neq 0$ for all $n\geq 0$. Now consider the sequence of currents 
$$
C_n=\frac{1}{d^n}{\langle\mu_{\sigma^n(\lambda)},\psi_{\sigma^n(\lambda)}\rangle}^{-1}{(H_\lambda^n)}^*(\psi_{\sigma^n(\lambda)} \mu_{\sigma^n(\lambda)}^+ )
$$
for $n\geq 0$. Let $\gamma$ be a limit point of the sequence $\{C_n\}$. Then by (\ref{C1}), it follows that $\lVert \gamma\rVert=1$. By step $2$ of Theorem $1.3$ in \cite{PV}, it can be shown that $\gamma$ is a closed positive $(1,1)$-current having support in $K_\lambda^+$. So by Corollary \ref{Cor1}, we get that $\gamma=\mu_\lambda^+$. This completes the proof.
\end{proof}

\subsection{Proof of Theorem \ref{thm4}}
\begin{proof}
To show that $H$ is mixing, it is sufficient to prove that
$$
\big \langle \mu, \varphi(\psi\circ H^n) \big \rangle \ra \big \langle \mu, \varphi \big \rangle \big \langle \mu,\psi \big \rangle
$$
as $n\ra \infty$ where $\varphi, \psi \in C^0(M\times \mathbb{C}^2)$. Let $\varphi_\lambda$ be the restriction of $\varphi$ to $\{\lambda\}\times \mathbb{C}^2$ and define $\tilde{\varphi}(\lambda)=\langle \mu_\lambda, \varphi_\lambda\rangle$. Similarly, we define $\psi_\lambda$ and $\tilde{\psi}$. Now
\begin{eqnarray}
\big\langle \mu, \varphi(\psi\circ H^n) \big\rangle 
&=& \int_M \big\langle \mu_\lambda, \varphi_\lambda(\psi_{\sigma^n(\lambda)}\circ H_\lambda^n)\big\rangle \mu'(\lambda) \nonumber \\
&=& \int_M \big\langle {(H_\lambda^n)}^* \mu_{\sigma^n(\lambda)}, \varphi_\lambda(\psi_{\sigma^n(\lambda)}\circ H_\lambda^n)\big\rangle \mu'(\lambda)\nonumber  \\
&=& \int_M \big\langle \mu_{\sigma^n(\lambda)}, \psi_{\sigma^n(\lambda)} {(H_\lambda^n)}_* \varphi_\lambda \big\rangle \mu'(\lambda)\nonumber \\
&=&\int_M \big\langle \mu_{\sigma^n(\lambda)}, \psi_{\sigma^n(\lambda)} \tilde{\varphi}(\lambda)\big\rangle \mu'(\lambda) + \int_M \big\langle \mu_{\sigma^n(\lambda)},\psi_{\sigma^n(\lambda)}\big({(H_\lambda^n)}_*\varphi_\lambda-\tilde{\varphi}(\lambda) \big)\big\rangle \mu'(\lambda)\nonumber\\
&=&\big\langle \mu', \tilde{\varphi}(\tilde{\psi}\circ \sigma^n)\big\rangle + \int_M \big\langle \mu_{\sigma^n(\lambda)},\psi_{\sigma^n(\lambda)}\big({(H_\lambda^n)}_*\varphi_\lambda-\tilde{\varphi}(\lambda) \big)\big\rangle \mu'(\lambda).\label{G3}
\end{eqnarray}
Since $\sigma$ is mixing for $\mu'$, the first term in (\ref{G3}) tends to $\big\langle \mu',\tilde{\varphi}\big\rangle \big\langle \mu',\tilde{\psi} \big\rangle= \big\langle \mu,\varphi\big\rangle \big\langle \mu,\psi \big\rangle$ as $n\ra \infty$. We will show that for each $\lambda\in M$,
\begin{equation}\label{G4}
\big\langle \mu_{\sigma^n(\lambda)}, \psi_{\sigma^n(\lambda)}\big( {(H_\lambda^n)}_*\varphi_\lambda\big)\big\rangle-\big\langle \mu_\lambda,\varphi_\lambda\big\rangle
\big\langle \mu_{\sigma^n(\lambda)},\psi_{\sigma^n(\lambda)}\big\rangle \ra 0
\end{equation} 
as $n\ra \infty$. To do this, observe that for each $\lambda\in M$ and $n\geq 1$, 
\begin{align}
&\big\langle \mu_{\sigma^n(\lambda)}, \psi_{\sigma^n(\lambda)}\big( {(H_\lambda^n)}_*\varphi_\lambda\big)\big\rangle
\nonumber\\
&=\int_{\mathbb{C}^2} \varphi_\lambda {(H_\lambda^n)}^*\big (\psi_{\sigma^n(\lambda)} \mu_{\sigma^n(\lambda)}^+\big) \wedge {(H_\lambda^n)}^*\big(\mu_{\sigma^n(\lambda)}^-\big)\nonumber 
=\frac{1}{2\pi}\int_{\mathbb{C}^2} \varphi_\lambda (d^{-n}){(H_\lambda^n)}^*\big (\psi_{\sigma^n(\lambda)}\mu_{\sigma^n(\lambda)}^+\big) \wedge dd^c {G}_\lambda^- \nonumber \\
&= \frac{1}{2\pi}\int_{\mathbb{C}^2} (dd^c \varphi_\lambda) (d^{-n}){(H_\lambda^n)}^*(\psi_{\sigma^n(\lambda)} \mu_{\sigma^n(\lambda)}^+ )\wedge {G}_\lambda^- 
+\frac{1}{2\pi}\int_{\mathbb{C}^2} (d^c \varphi_\lambda)(d^{-n}) {(H_\lambda^n)}^*(d\psi_{\sigma^n(\lambda)})\wedge \mu_{\sigma^n(\lambda)}^+\wedge {G}_\lambda^- \nonumber\\
&+\frac{1}{2\pi}\int_{\mathbb{C}^2} (d \varphi_\lambda)(d^{-n}) {(H_\lambda^n)}^*(d^c\psi_{\sigma^n(\lambda)})\wedge \mu_{\sigma^n(\lambda)}^+\wedge {G}_\lambda^- 
+\frac{1}{2\pi}\int_{\mathbb{C}^2}  \varphi_\lambda(d^{-n}) {(H_\lambda^n)}^*dd^c (\psi_{\sigma^n(\lambda)})\wedge \mu_{\sigma^n(\lambda)}^+\wedge {G}_\lambda^- .\nonumber \\ \label{G5}
\end{align}
By using an analogue of a result (Theorem $1.6$, (ii)) in \cite{BS3}, it follows that the last three terms of (\ref{G5}) tend to zero as $n\ra \infty$. Hence using Lemma \ref{G le1}, for each $\lambda\in M$, we get 
$$
\big\langle \mu_{\sigma^n(\lambda)}, \psi_{\sigma^n(\lambda)}\big( {(H_\lambda^n)}_*\varphi_\lambda\big) \big\rangle-\frac{1}{2\pi}\big\langle \mu_{\sigma^n(\lambda)}, \psi_{\sigma^n(\lambda)}\big\rangle  \int_{\mathbb{C}^2} (dd^c \varphi_\lambda)(\mu_\lambda^+)\wedge {G}_\lambda^-\ra 0
 $$
 as $n\ra \infty$. Thus for each $\lambda\in M$,
 \begin{equation}
 \big\langle \mu_{\sigma^n(\lambda)}, \psi_{\sigma^n(\lambda)}\big( {(H_\lambda^n)}_*\varphi_\lambda\big)\big\rangle-\big\langle \mu_{\sigma^n(\lambda)}, \psi_{\sigma^n(\lambda)}\big\rangle \big\langle \mu_\lambda,\varphi_\lambda \big\rangle\ra 0\label{G6}
 \end{equation}
 as $n\ra \infty$. Let, 
 $$
 R_n(\lambda):= \big\langle \mu_{\sigma^n(\lambda)}, \psi_{\sigma^n(\lambda)}\big( {(H_\lambda^n)}_*\varphi_\lambda\big) \big\rangle-\big\langle \mu_{\sigma^n(\lambda)}, \psi_{\sigma^n(\lambda)}\big\rangle \big\langle \mu_\lambda,\varphi_\lambda \big\rangle
 $$
for each $n\geq 1$.
By the dominated convergence theorem, it follows by (\ref{G6}) that 
 $$
 \int_M R_n(\lambda)\mu'(\lambda)\ra 0
 $$
 as $n\ra \infty$. This completes the proof.
\end{proof}

\subsection{Proof of Proposition \ref{P4}}
\begin{proof}
For any two compactly supported continuous functions $\varphi$, $\psi$ in $\mathbb{C}^2$  and for a fixed $\lambda\in M$, a similar calculation as in (\ref{G6}) gives 
\[
 \big\langle \mu_{\sigma^n(\lambda)}, \psi\big( {(H_\lambda^n)}_*\varphi\big)\big\rangle-\big\langle \mu_{\sigma^n(\lambda)}, \psi\big\rangle \big\langle \mu_\lambda,\varphi \big\rangle\ra 0
\]
as $n\ra \infty$. Now since for each $\lambda\in M$, ${(H_\lambda)}^*(\mu_{\sigma(\lambda)})=\mu_\lambda$, it follows that
\[
\big\langle \mu_\lambda, \big( {(H_\lambda^n)}^*\psi\big)\varphi \big\rangle-\big\langle \mu_{\sigma^n(\lambda)}, \psi\big\rangle \big\langle \mu_\lambda,\varphi \big\rangle\ra 0
\]
as $n\ra \infty$. This completes the proof.
\end{proof}


\section{Lyapunov exponents}
\subsection{Proof of Theorem \ref{thm6}}
\begin{proof}
Let $v_p=\frac{\partial}{\partial y}\|_p$. Since $\lambda(v,p)\leq \lambda_1$ for all $p\in M$ and for all $v\in T_p$, in particular, we have  
\[
\lambda(v_p)=\lim_{n\ra \infty} \frac{1}{n} \log \lVert DH^n(v_p,p)\rVert\leq \lambda_1
\]
for all $p\in M$. This gives 
\[
\lim_{n\ra \infty} \frac{1}{n} \int \log \lVert DH^n(v_p)\rVert\leq \lambda_1
\]
where $DH^n(v_p)$ is same as $DH^n(p,v_p)$. We shall show that 
\[
\lim_{n\ra \infty} \frac{1}{n} \int \log \lVert DH^n(v_p)\rVert\geq \log d 
\]
which in turn provides a lower bound for the largest Lyapunov exponent $\lambda_1$.

\medskip

Let $X=\{x=0\}$ in $\mathbb{C}^2$ and for a $\lambda\in M$, $K_{\lambda,0}= X\cap K_\lambda^+$. Now note that
\begin{eqnarray*}
\int \frac{1}{n} \log \lVert DH^n(v_p)\rVert d\mu
&=& \int \Big(\int \frac{1}{n} \log \lVert DH_\lambda^n(v_p)\rVert d\mu_\lambda \Big)d\mu'.
\end{eqnarray*} 
A result analogous to Theorem \ref{thm1} shows that  
$$
\lim_{k\ra \infty} d^{-k} \big(H_{\sigma^{-k}(\lambda)}^{-1}\circ \cdots \circ H_{\sigma^{-1}(\lambda)}^{-1}\big)[X]=\mu_\lambda^-
$$
for each $\lambda\in M$. Thus
\begin{align}
&\int \frac{1}{n} \log \lVert DH_\lambda^n(v_p)\rVert d\mu_\lambda
=\lim_{k\ra \infty} \frac{1}{n} \int \log \lVert DH_\lambda^n(v_p)\rVert\mu_\lambda^+ \wedge d^{-k}{\big(H_{\sigma^{-k}(\lambda)}^{-1}\circ \cdots \circ H_{\sigma^{-1}(\lambda)}^{-1}\big)}^*[X] \nonumber\\
&\geq \lim_{k\ra \infty} \frac{1}{n} \int \log \lvert \partial_y{(DH_\lambda^n)}_2 \rvert \mu_\lambda^+  \wedge d^{-k}{\big(H_{\sigma^{-k}(\lambda)}^{-1}\circ \cdots \circ H_{\sigma^{-1}(\lambda)}^{-1}\big)}^*[X] \nonumber\\
&=\lim_{k\ra \infty} \frac{1}{n} \int \log \lvert \partial_y{(DH_\lambda^n)}_2 \rvert {\big(H_{\sigma^{-k}(\lambda)}^{-1}\circ \cdots \circ H_{\sigma^{-1}(\lambda)}^{-1}\big)}^*(\mu_{\sigma^{-k}(\lambda)}^+) \wedge {\big(H_{\sigma^{-k}(\lambda)}^{-1}\circ \cdots \circ H_{\sigma^{-1}(\lambda)}^{-1}\big)}^*[X]\nonumber\\
&=\lim_{k\ra \infty}\frac{1}{n} \int \log \lvert \partial_y{(DH_\lambda^n)}_2 \circ \big(H_{\sigma^{-1}(\lambda)}^{-1}\circ \cdots \circ H_{\sigma^{-k}(\lambda)}^{-1}\big) \rvert \mu_{\sigma^{-k}(\lambda)}^+\wedge [X].\nonumber\\
&= \lim_{k\ra \infty}\frac{1}{n} \int \log \lvert \partial_y{(DH_\lambda^n)}_2 \circ \big(H_{\sigma^{-1}(\lambda)}^{-1}\circ \cdots \circ H_{\sigma^{-k}(\lambda)}^{-1}\big) \rvert \mu_{K_{\sigma^{-k}(\lambda),0}^+}\nonumber\\
\label{lya}
\end{align}
for each $\lambda\in M$.
Now note that 
$$
\partial_y{(DH_\lambda^n)}_2 \circ \big(H_{\sigma^{-1}(\lambda)}^{-1}\circ \cdots \circ H_{\sigma^{-k}(\lambda)}^{-1}\big)=d^n y^{(d^n-1)d^k}+\cdots$$
where the dots represent terms of lower degree, and thus 
$$
\log \big \lvert \partial_y{(DH_\lambda^n)}_2 \circ \big(H_{\sigma^{-1}(\lambda)}^{-1}\circ \cdots \circ H_{\sigma^{-k}(\lambda)}^{-1}\big)\big\rvert=n\log d+ \log \lvert p_k \rvert 
$$
where $p_k$ is a monic polynomial in $y$. Therefore by (\ref{lya})
\begin{eqnarray*}
\int \frac{1}{n} \log \lVert DH^n(v_p)\rVert d\mu &=&\int \Big(\int \frac{1}{n} \log \lVert DH_\lambda^n(v_p)\rVert d\mu_\lambda \Big) d\mu'\\
&\geq & \log d.
\end{eqnarray*}

\no
Now choosing an appropriate vector in the tangent space at the point $p$, we get that $\lambda_1\geq \lambda_\sigma$ where $\lambda_\sigma$ is the largest Lyapunov exponent of $\sigma$. Thus 
\[
\lambda_1\geq \max\{\log d, \lambda_\sigma\}.
\]
\end{proof}


\section{Skew products of H\'{e}non maps: Entropy bounds}
\no
Before proving Theorem \ref{thm5}, we will first show that 
\[
h_{\rm{top}}(H)\geq h_{\rm{top}}(\sigma)+\log d
\]
by adapting Smillie's arguments in \cite{Sm} that are valid for a single H\'{e}non map. The main difficulty of obtaining bounds for volume growth of certain discs requires some ideas from \cite{YK}.

\medskip

\no
Let $\tau: \mathbb{C}\rightarrow \mathbb{C}^2$ be defined by $\tau(z)=(0,z)$. For $\lambda\in M$ and for $n\geq 1$, consider the following metrics on $\mathbb{C}^2$: 
$$
e_{n,\lambda}(x,y)=\max_{0\leq i\leq n-1}e(H_{\sigma^{i}(\lambda)}\circ \cdots \circ H_\lambda(x),H_{\sigma^{i}(\lambda)}\circ \cdots \circ H_\lambda(y)),$$
where $e$ is the Euclidean metric on $\mathbb{C}^2$. Further, for each $\lambda\in M$ and for $n\geq 1$, consider
$$
V_{n,\lambda}= H_\lambda^{-1}\circ \cdots \circ H_{\sigma^{n-1}(\lambda)}^{-1}(V_R)\cap V_R
$$
and let
$$
\upsilon_{n,\lambda}=\text{Area of } \big(H_{\sigma^{-1}(\lambda)}\circ \cdots \circ H_{\sigma^{-n}(\lambda)}(\tau (D_R))\cap V_R\big).
$$

\medskip

\no
Let 
\[
\bigcup_{i=1}^{K(n,\epsilon,V_{n,\lambda})}U_i^\lambda
\]
be a minimal $\epsilon$-covering of $V_{n,\lambda}$ with respect to $e_{n,\lambda}$-metric.
Thus
\[
V_R \cap (H_{\sigma^{n-1}(\lambda)}\circ\cdots \circ H_\lambda)(V_R)= \bigcup_{i=1}^{K(n,\epsilon,V_{n,\lambda})}(H_{\sigma^{n-1}(\lambda)}\circ\cdots \circ H_\lambda)(U_i^\lambda)
\]
which implies
\begin{equation}\label{vol}
\upsilon _{n,\sigma^n(\lambda)}\leq K(n,\epsilon,V_{n,\lambda}) \upsilon_{\sigma^n(\lambda)}^{0}(n,\epsilon)
\end{equation}
for all $\lambda\in M$ and for all $n\geq1$, where
$$
\upsilon_{\sigma^n(\lambda)}^{0}(n,\epsilon)=\sup_{1\leq i \leq K(n,\epsilon,V_{n,\lambda})}\text{Area of }(H_{\sigma^{n-1}(\lambda)}\circ\cdots \circ H_\lambda)(U_i^\lambda\cap \tau(D_R)).
$$

\medskip

\no
{\it Step 1:}
We will prove the following which is a variant of the main result given \cite{YK}.

\medskip

\no
{\it
{For any sequence ${\{\lambda_k\}}_{k\geq 1}\subset M$, 
$$
\lim_{\epsilon\ra 0}\limsup_{k\ra \infty}\frac{1}{n_k}\upsilon_{\sigma^{n_k}(\lambda_k)}^{0}(n_k,\epsilon)\ra 0,
$$
whenever $n_k\ra \infty$ as $k\ra \infty$.}}

\medskip

\no
Fix $p\geq 1$. Let ${\{F_n\}}_{n\geq 1}$ be defined by
$$
F_n=H_{\lambda_1^{(n)}}\circ\cdots\circ H_{\lambda_p^{(n)}}
$$ 
where $\{\lambda_1^{(n)}, \lambda_2^{(n)},\cdots,\lambda_p^{(n)}\}\subset M$. For each $n\geq 1$, let 
\[
e_n(x,y)=\max_{0\leq i \leq n} e(F_n\circ \cdots \circ F_1(x),F_n\circ \cdots \circ F_1(y)) 
\] 
and 
\[
W_n=F_1^{-1}\circ \cdots \circ F_n^{-1}(V_R)\cap V_R.
\]  
Let 
$$
\bigcup_{i=1}^{m_n}U_i
$$
be a minimal $\epsilon$-covering of $W_n$ with respect to the metric $e_n$.

\medskip

\no
Consider an $\epsilon < L^{-p}$ where
\[
{\lVert DH_\lambda\rVert}_{V_R}\leq L < \infty
\] 
for all $\lambda\in M$. This choice of $L$ is possible since $\lambda$ varies in a compact metric space. Next fix some $U_i$ where $1\leq i \leq m_n$ which we call $U$ for the sake of notational simplicity. Let $x_0\in V_R$ be the center of $U$. Consider the orbit $\{x_0,x_1,...,x_n\}$ where 
$$
x_i=(F_i \circ\cdots \circ F_1)(x_0)\in V_R
$$
for $1\leq i \leq n$. Let $U^i$ be the balls of radius $\epsilon$ centered at $x_i$ with respect to the standard metric on $\mathbb{C}^2$. Define $\psi_i(x)=x_i+\epsilon x$ and consider the new mappings
$$
G_i=\psi_i^{-1}\circ F_i \circ \psi_{i-1}
$$  
for $1\leq i \leq n$. Note that the $G_i$'s are well-defined on all of $\mathbb{C}^2$ and in particular on 
$$
\mathcal{B}_i=\psi_{i-1}^{-1}\big((F_i^{-1}\circ \cdots \circ F_n^{-1})(V_R)\cap (F_{i-1}\circ \cdots \circ F_1)(V_R)\big)\subset \psi_{i-1}^{-1}(V_R).
$$ 

\medskip

\no
Let 
\[
\upsilon_0(n,\epsilon)=\text{Area of }(F_n\circ\cdots \circ F_1)(U\cap \tau(D_R))=\text{Area of }(\psi_n \circ G_n\circ\cdots \circ G_1\circ \psi_0^{-1})(U\cap \tau(D_R))
\]
for $n\geq 1$. Fix $l\geq 1$. By arguing as in Proposition $2.1$ in \cite{YK}, we can show that 
\begin{equation}\label{volume}
\upsilon_0(n,\epsilon)\leq c(\tau,\epsilon){d(l)}^{n} {(M_l^{\mathcal{B}_1}(G_1))}^{\frac{4}{l}}\cdots {(M_l^{\mathcal{B}_n}(G_{n}))}^{\frac{4}{l}}
\end{equation}
where 
\[
M_l^\mathcal{B}(f)=\max \{1,{\lVert Df \rVert}_{C^l}^\mathcal{B}\}
\]
with the $C^l$-norm ${\lVert Df \rVert}_{C^l}^\mathcal{B}=\max_{1\leq s \leq l} \lVert D^s f\rVert_\mathcal{B}$ and $d(l)$ is a constant depending only on $l$.

\medskip

\no
Now note that 
\begin{equation*}
{\lVert D^s G_i\rVert}_{\mathcal{B}_i}\leq \epsilon^{s-1}{\lVert D^s F_i \rVert}_{\mathcal{D}_i}
\end{equation*}
where $\mathcal{D}_i={(F_i^{-1}\circ \cdots \circ F_n^{-1})(V_{R})\cap (F_{i-1}\circ\cdots\circ F_1)(V_R)}$.

\medskip

\no
Since $\epsilon < L^{-p}$, 
\[
{M_l^{\mathcal{B}_i}(G_i)}\leq \max \{1, {\lVert D F_i\rVert}_{\mathcal{D}_i}\} \leq \max \{1, L^p \}
\] 
for $1\leq i \leq n$.

\medskip

\no
As in \cite{YK}, for $\epsilon<L^{-p}$, it follows that
\begin{eqnarray}\label{volest}
\frac{1}{n_k} \log \upsilon_0(n_k,\epsilon)\leq \frac{1}{n_k}\log c(\tau,\epsilon)+\log d(l)+ \frac{4}{l} p\log L. 
\end{eqnarray}
Note that the volume estimate in (\ref{volest}) is independent of the sequence $\{{F_n}\}_{n\geq 1}$ we start with. 

\medskip

\no
Consequently, 
\begin{eqnarray}
\frac{1}{n_k} \log \Big(\text{Area of }{( H_{\sigma^{n_k-1}(\lambda_k)}\circ\cdots\circ H_{\lambda_k})}^p(U\cap \tau(D_R)\Big) &\leq& \frac{1}{n_k}\log c(\tau,\epsilon)+\log d(l)+ \frac{4}{l} p \log L \nonumber\\
&&
\end{eqnarray}
since we can rewrite ${( H_{\sigma^{n_k-1}(\lambda_k)}\circ\cdots\circ H_{\lambda_k})}^p$ as the composition of $n_k$-many mappings each of which is composition of $p$-many $H_\lambda$'s. 

\medskip

\no
Thus for $\epsilon<{1}/{L^p}$, we have
\begin{eqnarray*}
\frac{1}{n_k}\log \upsilon_{\sigma^{n_k}(\lambda_k)}^0 (n_k,\epsilon)&\leq& \frac{1}{p n_k}\log c(\tau,\epsilon)+\frac{\log d(l)}{p}+ \frac{4\log L}{l}  
\end{eqnarray*}
which gives
\begin{equation*}
\lim_{\epsilon\ra 0}\limsup_{k\ra \infty}\frac{1}{n_k}\log \upsilon_{\sigma^{n_k}(\lambda_k)}^0 (n_k,\epsilon)\leq \frac{4\log L}{l}.
\end{equation*}
As $l\ra \infty$, we get
\begin{equation}\label{limit}
\lim_{\epsilon\ra 0}\limsup_{k\ra \infty}\frac{1}{n_k}\log \upsilon_{\sigma^{n_k}(\lambda_k)}^0 (n_k,\epsilon)=0.
\end{equation}

\medskip

\no
{\it Step 2:}
{\it{Claim: For $0<\alpha<1$, there exists an ${\epsilon_\alpha}> 0$ and ${n_{\alpha}}\geq 1$ such that
\[
K(n,\epsilon,V_{n,\lambda})\geq d^{\alpha n}
\]
for all $n\geq n_\alpha$, for all $\epsilon\leq {\epsilon_\alpha}$ and for all $\lambda\in M$.}}

\medskip

\no
If not, then for any given $\epsilon>0$, we can choose a sequence of natural numbers ${\{n_k\}}_{k\geq 1}$ and ${\{\lambda_k\}}_{k\geq 1}\subseteq M$ such that
\[
K(n_k,\epsilon,V_{n_k,\lambda_k})<d^{\alpha n_k}.
\]
By (\ref{vol}), we get
\begin{equation}
\upsilon_ {n_k,\sigma^{n_k}(\lambda_k)}\leq K(n_k,\epsilon,V_{n_k,\lambda_k})\upsilon_{\sigma^{n_k}(\lambda_k)}^{0}(n_k,\epsilon)
\end{equation}
for all $k\geq 1$. Thus
\begin{equation}
\limsup_{k\ra \infty}\frac{1}{n_k}\log K(n_k,\epsilon,V_{n_k,\lambda_k})+\limsup_{k\ra \infty}\frac{1}{n_k} \log \upsilon_{\sigma^{n_k}(\lambda_k)}^{0}(n_k,\epsilon) \geq \log d> \alpha \log d+ \delta_\alpha
\end{equation}
where $\delta_\alpha$ is a positive quantity depending on $\alpha$. 
Now by (\ref{limit}), a sufficiently small $\epsilon_\alpha>0$ can be chosen such that
\[
\limsup_{k\ra \infty}\frac {1}{n_k}\log\upsilon_{\sigma^{n_k}(\lambda_k)}^{0}(n_k,\epsilon)< \frac{\delta_\alpha}{2}
\]
for all $\epsilon\leq \epsilon_\alpha$. Hence
\begin{equation}
\limsup_{k\ra \infty}\frac{1}{n_k}\log K(n_k,\epsilon,V_{n_k,\lambda_k})>\alpha\log d+\frac{\delta_\alpha}{2}
\end{equation}
for all $\epsilon\leq \epsilon_\alpha$. This completes the proof.

\medskip
   
\no
{\it Step 3:}
Since for all $n\geq 1$ and $\epsilon>0$, the cardinality of an $(n,\epsilon)$-separated set of $V_{n,\lambda}$, which we denote by $S(n,\epsilon,V_{n,\lambda})$, is at least $K(n,\epsilon,V_{n,\lambda})$, we have the following: for $0<\alpha<1$, there exists an ${\epsilon_\alpha}> 0$ and ${n_{\alpha}}\geq 1$ such that
\begin{equation}\label{estimate}
S(n,\epsilon,V_{n,\lambda})\geq d^{\alpha n}
\end{equation}
for all $n\geq n_\alpha$, for all $\epsilon\leq {\epsilon_\alpha}$ and for all $\lambda\in M$. 

\medskip

Let $E=\{\lambda_1,\ldots, \lambda_r\}$ be an $(n,\epsilon)$-separated set of $M$ and $Z_{\lambda_i}$ be an $(n,\epsilon)$-separated set of $V_{n,\lambda_i}$ for $1\leq i \leq r$. Clearly 
$$
Z=\cup_{i=1}^r \{\lambda_i\}\times Z_{\lambda_i}
$$
is an $(n,\epsilon)$-separated set of $V_n$. Hence
\[
S(n,\epsilon, V_n)\geq \sum_{i=1}^r S(n,\epsilon,V_{n,\lambda_i})
\]
where
$$
V_n=\cup_{\lambda\in M}\{\lambda\}\times V_{n,\lambda}.
$$
Now if we take $\epsilon_\alpha>0$ sufficiently small and $n_\alpha\geq 1$ large enough, then it follows from (\ref{estimate})
that 
\begin{equation}\label{separate}
S(n,\epsilon, V_n)\geq S(n,\epsilon,M)d^{\alpha n}
\end{equation}
for all $\epsilon\leq \epsilon_\alpha$ and for all $n\geq n_\alpha$.

\medskip

Let $Q$ denote the quotient space $\big(M\times(V_R\cup V_R^+)\big)/M\times V_R^+$ and let $q$ correspond to the class of $M\times V_R^+$ in the quotient space, i.e., $q$ is a point in $Q$. Consider the natural metric $\bar{e}$ on $Q$, defined as follows:
\begin{eqnarray*}
\bar{e}(x,y)&=& \min\{ e(x,y), e(x,M\times V_R^+)+e(y,M\times V_R^+)\}\\
\bar{e}(x,q)&=&e(x,M\times V_R^+)
\end{eqnarray*}
where $e$ is the natural Euclidean metric on $\mathbb{C}^2$. Define $\overline{H}: Q\to  Q$ by 
\[
z\mapsto
\begin{cases}
(\sigma(\lambda),{h_\lambda}(x,y)); &\quad \text{if } z=(\lambda,x,y)\in M\times V_R\\
q; &\quad \text{otherwise}
\end{cases}
\]

\medskip

For a fixed $\epsilon>0$, we can choose sufficiently large $n$ such that $V_n\subset \subset M\times {V_R}$ and on these $V_n$'s the quotient metric $\bar{e}$ on $Q$ coincides with the natural metric on $M\times V_R$. Hence  
\begin{eqnarray*}
h(\overline{H})&=&\lim_{\epsilon\ra 0}\limsup_{n\ra \infty}\frac{1}{n} \log S(n,\epsilon, Q)\\
&\geq&\lim_{\epsilon\ra 0}\limsup_{n\ra \infty}\frac{1}{n} \log S(n,\epsilon,V_n).
\end{eqnarray*}
Now it follows from (\ref{separate}) that 
\begin{eqnarray*}
h(\overline{H})&\geq& \lim_{\epsilon\ra 0}\limsup_{n\ra \infty}\frac{1}{n} S(n,\epsilon,M)+\alpha \log d\\
&\geq& h(\sigma)+\alpha \log d.
\end{eqnarray*}
Letting $\alpha\rightarrow 1$, as explained in Theorem $1$ in \cite{Sm}, we conclude that 
\[
h(H)\geq h(\sigma)+\log d.
\]

\subsection{Proof of Theorem \ref{thm5}}
\begin{proof}
Let $L$ be a $C^2$-smooth subharmonic function of $\lvert y \rvert$ such that $L(y)=\log \lvert y \rvert$ for $\lvert y \rvert >R$ and define $\Theta=\frac{1}{2\pi}dd^c L$ and $\Theta_{n,\lambda}={(H_\lambda^n)}^* \Theta$ for each $\lambda\in M$ and for each $n\geq 1$. Now consider the disc $\mathcal{D}= \{x=0,\vert y \vert < R\} \subset \mbb C^2$ 
and let 
\[
\alpha_{n,\la}= [\mathcal{D}] \wedge \Theta_{n,\lambda}
\]
for each $\lambda\in M$ and for each $n\geq 1$. Note that $\int_{\mathcal{D}} \alpha_{n,\lambda}=d^n$ and thus $\rho_{n,\lambda}=d^{-n} \alpha_{n,\lambda}$ is a probability measure. 

\medskip

\no
{\it Step 1:} For each $\lambda\in M$ and $n\geq 1$, let
\[
\mu_{n,\lambda} =\frac{1}{n} \sum_{j=0}^{n-1} \frac{d^{-n}}{2\pi} {\big(H_{\sigma^{-j}(\lambda)}^{-1} \circ \cdots \circ H_{\sigma^{-1}(\lambda)}^{-1}\big)}^*(\alpha_{n,\sigma^{-j}(\lambda)}).
\]
 {\it Claim:}  For each $\lambda\in M$, 
$
\mu_{n,\lambda}\ra \mu_\lambda
$
as $n\ra \infty$.

\medskip

Let $\{l_n\}_{n\geq 1}$ be a sequence of natural numbers such that $l_n \ra \infty$ and ${l_n}/{n}\ra 0$ as $n\ra \infty$. Then for each $n\geq 1$ and for each $\lambda\in M$, $\mu_{n,\lambda}$ is equal to the following:
\begin{multline} \label{eq a}
\frac{d^{-n}}{2n\pi} \Big( \sum_{j=0}^{l_n} {\big(H_{\sigma^{-j}(\lambda)}^{-1} \circ \cdots \circ H_{\sigma^{-1}(\lambda)}^{-1}\big)}^*\big(\left[\mathcal{D}\right]\wedge \Theta_{n,\sigma^{-j}(\lambda)}\big)\\
 + \sum_{j=l_n+1}^{n-l_n}{\big(H_{\sigma^{-j}(\lambda)}^{-1} \circ \cdots \circ H_{\sigma^{-1}(\lambda)}^{-1}\big)}^*\big(\left[ \mathcal{D}\right]\wedge \Theta_{n,\sigma^{-j}(\lambda)}\big)\\ 
 + \sum_{j=n-l_n+1}^{n-1}{\big(H_{\sigma^{-j}(\lambda)}^{-1} \circ \cdots \circ H_{\sigma^{-1}(\lambda)}^{-1}\big)}^*\big(\left[\mathcal{D}\right]\wedge \Theta_{n,\sigma^{-j}(\lambda)}\big)\Big). 
\end{multline}
We show that the first and the third terms of the above equation tend to zero as $n\ra \infty$. Let $\chi$ be a test function. Then consider
\begin{multline}
{d^{-n}}\Big\langle  \sum_{j=0}^{l_n} {\big(H_{\sigma^{-j}(\lambda)}^{-1} \circ \cdots \circ H_{\sigma^{-1}(\lambda)}^{-1}\big)}^* \big(\left[\mathcal{D}\right]\wedge\Theta_{n,\sigma^{-j}(\lambda)}\big)\\
+\sum_{j=n-l_n+1}^{n-1}{\big(H_{\sigma^{-j}(\lambda)}^{-1} \circ \cdots \circ H_{\sigma^{-1}(\lambda)}^{-1}\big)}^*\big(\left[\mathcal{D}\right]\wedge\Theta_{n,\sigma^{-j}(\lambda)}\big),\chi\Big\rangle \nonumber 
\end{multline}
which is equal to 
\begin{multline}
\sum_{j=0}^{l_n}\Big\langle {d^{-n}}{\big(H_{\sigma^{-j}(\lambda)}^{-1} \circ \cdots \circ H_{\sigma^{-1}(\lambda)}^{-1}\big)}^* \big(\left[\mathcal{D}\right]\wedge\Theta_{n,\sigma^{-j}(\lambda)}\big), \chi\Big\rangle\\
 + \sum_{j=n-l_n+1}^{n-1} \Big\langle{d^{-n}} {\big(H_{\sigma^{-j}(\lambda)}^{-1} \circ \cdots \circ H_{\sigma^{-1}(\lambda)}^{-1}\big)}^*\big(\left[\mathcal{D}\right]\wedge\Theta_{n,\sigma^{-j}(\lambda)}\big) 
,\chi\Big\rangle. \label{eq b}
\end{multline}
\no
Note that each term of (\ref{eq b}) is uniformly bounded by some fixed constant independent of $n$ and the total number of terms in (\ref{eq b}) is $2 l_n$. This shows that the first and the third terms in (\ref{eq a}) tend to $0$ as $n\ra \infty$ since ${l_n}/{n}\ra 0$ as $n\ra \infty$. \\

\medskip

Furthermore,
\begin{eqnarray*}
 &&d^{-n} {\big(H_{\sigma^{-j}(\lambda)}^{-1} \circ \cdots \circ H_{\sigma^{-1}(\lambda)}^{-1}\big)}^* \big(\left[\mathcal{D}\right]\wedge \Theta_{n,\sigma^{-j}(\lambda)}\big)\\
 &&= d^{-j}{\big(H_{\sigma^{-j}(\lambda)}^{-1} \circ \cdots \circ H_{\sigma^{-1}(\lambda)}^{-1}\big)}^* \left[\mathcal{D}\right] \wedge (d^{-n+j}){\big(H_{\sigma^{n-j-1}(\lambda)}\circ \cdots \circ H_\lambda\big)}^*\big(\frac{1}{2\pi} dd^c L\big)\\
 &&= \frac{\mu_{j,\lambda}}{2\pi}^- \wedge dd^c {G}_{n-j,\lambda}^+
 \end{eqnarray*}
 where $\mu_{j,\lambda}^-=d^{-j}{\big(H_{\sigma^{-j}(\lambda)}^{-1} \circ \cdots \circ H_{\sigma^{-1}(\lambda)}^{-1}\big)}^* \left[\mathcal{D}\right]$.\\
\indent 
 Now note that
 \begin{eqnarray}
 \frac{1}{n }\Big(\sum_{j=l_n+1}^{n-l_n} {G}_{n-j,\lambda}^+ \mu_{j,\lambda}^-\Big) 
&=& {G}_\lambda^+ \frac{1}{n} \sum_{j=l_n+1}^{n-l_n}\mu_{j,\lambda}^- + \frac{1}{n} \sum_{j=l_n+1}^{n-l_n}\Big( {G}_{n-j,\lambda}^+ -{G}_\lambda^+\Big)\mu_{j,\lambda}^- \nonumber\\
\label{eq c}
 \end{eqnarray}
 \no
 and since on any compact subset of $\mathbb{C}^2$, the sequence of functions ${G}_{n-j,\lambda}^-$ converge to ${G}_\lambda^-$ as $n-j\ra \infty$ and $\mu_{j,\lambda}^-$ has uniformly bounded mass, the last term of (\ref{eq c}) tends to zero as $n\ra \infty$. Since $\left[ \mathcal{D}\right]$ is a closed positive $(1,1)$-current of mass $1$ on $\mathbb{C}^2$ vanishing outside $\mathcal{D}$, an analogous result to Theorem $1.6$, (iii) in \cite{BS3} implies that $\mu_{j,\lambda}^-$ converges to $\mu_\lambda^-$ as $j\ra \infty$. Therefore
 \[
\lim_{n\ra \infty} \frac{d^{-n}}{n}\sum_{j=l_n+1}^{n-l_n}{\big(H_{\sigma^{-j}(\lambda)}^{-1} \circ \cdots \circ H_{\sigma^{-1}(\lambda)}^{-1}\big)}^* \big(\left[\mathcal{D}\right]\wedge \Theta_{n,\sigma^{-j}(\lambda)}\big)=\mu_\lambda
 \]
 for each $\lambda\in M$ and thus
\[
\lim_{n\ra \infty}\mu_{n,\lambda}=\mu_\lambda
\] 
for each $\lambda\in M$.

\medskip

\no
{\it Step 2:} For an arbitrary compactly supported probability measure $\mu'$ on $M$ and for each $n \ge 0$, let $\mu_n$ and $\rho_n$ be defined by the following recipe, i.e., for a test function $\varphi$,
\begin{equation} \label{mu}
\langle \mu_n, \varphi \rangle = \int_M \left ( \int_{\{ \la \} \times \mbb C^2} \varphi \; \mu_{n, \la}  \right) \mu'(\la) \;\; \text{and} \;\; \langle \rho_n, \varphi \rangle  = \int_M \left (
\int_{\{ \la \} \times \mbb C^2} \varphi \; \rho_{n, \la} \right) \mu'(\la). 
\end{equation}

\medskip

\no
{\it Claim:} 
$$
\lim_{n\ra \infty} \mu_n=\mu \; \text{and} \; \mu_n=\frac{1}{n}\sum_{j=0}^{n-1}H_*^j\rho_n
$$
where $H$ is as in (\ref{homeo}).  For the first claim, note that for all test functions $\varphi$
\begin{eqnarray}
 \lim_{n\ra \infty}\langle \mu_n,\varphi\rangle &=& \lim_{n\ra \infty}\int_M \langle \mu_{n,\la},\varphi\rangle \mu'(\la) = \int_M \lim_{n\ra \infty}\langle \mu_{n,\la},\varphi\rangle \label{28}
\mu'(\la)\\ \notag
 &=& \int_M \langle \mu_\la,\varphi\rangle \mu'(\la) = \langle\mu,\varphi\rangle  
\end{eqnarray}
where the second equality follows by the dominated convergence theorem. For the second claim, note that
\begin{eqnarray}
\left \langle \frac{1}{n}\sum_{j-0}^{n-1}H^j_*\rho_n,\varphi \right \rangle 
&=& \int_M \left \langle \frac{1}{n} \sum_{j=0}^{n-1} {\big(H_{\sigma^{-j}(\lambda)}^{-1} \circ \cdots \circ H_{\sigma^{-1}(\lambda)}^{-1}\big)}^*(\rho_{n,\sigma^{-j}(\lambda)}),\varphi \right \rangle \mu'(\la)\nonumber\\
 &=& \int_M \langle\mu_{n,\la},\varphi \rangle \mu'(\la) = \langle \mu_n,\varphi\rangle.
\end{eqnarray}
Hence we get
\[
\lim_{n\ra\infty}\frac{1}{n}\sum_{j-0}^{n-1}H^j_*\rho_n=\mu.
\]

\medskip

\no
{\it Step 3:} We choose partition $\mathcal{P}=\{\mathcal{P}_1,\ldots,\mathcal{P}_k\}$ of $V_R$ such that
 $\int \mu_\lambda(\partial \mathcal{P}_i)d\mu'(\lambda)=0$ for $1\leq i \leq k$ where $\partial \mathcal{P}_i $ denotes the boundary of $\mathcal{P}_i$.  Let $\mathcal{B}$ be a member of $\bigvee_{i=0}^{n-1}{(H_\lambda^i)}^{-1}\mathcal{P}$. Then
 $$
 \rho_{n,\lambda}(\mathcal{B})=d^{-n}\int_{\mathcal{B}\cap \mathcal{D}}H_\lambda^{n*}\Theta=d^{-n}\int_{H_\lambda^n(\mathcal{B}\cap \mathcal{D})}\Theta.
 $$
 Since $\Theta$ is bounded above on $\mathbb{C}^2$, there exists $C>0$ such that 
 \[
 \rho_{n,\lambda}(\mathcal{B})\leq Cd^{-n}\text{Area} (H_\lambda^n(\mathcal{B}\cap \mathcal{D})).
 \]
 If $v_0(H,n,\lambda,\epsilon)$ denotes the supremum of the area of $H_\lambda^n(\mathcal{B}\cap \mathcal{D})$ over all $\epsilon$-balls (in the $d_{n,\lambda}$ metric where $d_{n,\lambda}(x,y)=\max_{0\leq i \leq (n-1)}d(H_{\sigma^{i-1}(\lambda)}\circ\cdots \circ H_\lambda(x),H_{\sigma^{i-1}(\lambda)}\circ\cdots \circ H_\lambda(y))$) $\mathcal{B}$ of $\bigvee_{i=0}^{n-1}{(H_\lambda^i)}^{-1}\mathcal{P}$, then
 \begin{equation}\label{rho}
 H_{\rho_{n,\lambda}}\Big(\bigvee_{i=0}^{n-1}{(H_\lambda^i)}^{-1}\mathcal{P}\Big)\geq -\log C+n\log d-\log v_0(H,n,\lambda,\epsilon).
 \end{equation}
 Let $\mathcal{F}_R$ be the $\sigma$-algebra formed by all sets $\mathcal{A}\times V_R$ where $\mathcal{A}\in \mathcal{F}$. Further, let $\mathcal{Q}=\{\mathcal{Q}_1,\ldots,\mathcal{Q}_k\}$ where $\mathcal{Q}_i=M\times\mathcal{P}_i$ for $1\leq i \leq k$. Then by (\ref{mu}) and (\ref{rho})
 \[
 H_{\rho_n}\Big(\bigvee_{i=0}^{n-1}H^{-i}\mathcal{Q}|\mathcal{F}_{R}\Big)\geq -\log C+n\log d-\int_M \log v_0(H,n,\lambda,\epsilon)d\mu'(\lambda).
 \]
 Following the same method as explained in \cite{K}, we get
  \begin{equation}
 \lim_{n\ra\infty}\frac{1}{n}H_\mu \Big(\bigvee_{i=0}^{n-1}H^{-i}\mathcal{Q}|\mathcal{F}_R\Big)\geq \log d-\limsup_{n\ra \infty}\frac{1}{n}\int_M \log v_0(H,n,\lambda,\epsilon).
 \end{equation}
 This shows 
 \[
 \lim_{n\ra\infty}\frac{1}{n}\int H_{\mu_\lambda}\Big(\bigvee_{i=0}^{n-1}{(H_\lambda^i)}^{-1}\mathcal{P}\Big)d\mu'(\lambda)\geq \log d-\limsup_{n\ra \infty}\frac{1}{n}\int_M \log v_0(H,n,\lambda,\epsilon).
 \]
By the Abramov-Rohlin theorem \cite{AR}, and letting $\epsilon\ra 0$, we get
\[
h(H)\geq h(\sigma)+ \log d
\] 
since 
\[
\lim_{\epsilon\ra 0}\limsup_{n\ra \infty}\frac{1}{n}\int_M \log v_0(H,n,\lambda,\epsilon)=0.
\]
\end{proof}



\begin{thebibliography}{MNTU}
\bibitem{AR} Abramov, L. M., Rohlin, V. A.:
\emph{Entropy of a skew product of mappings with invariant measure}, Vestnik Leningrad. Univ.  {\bf 17} (1962), pp. 5--13


\bibitem{BS1} E. Bedford, J. Smillie:
\emph{Polynomial diffeomorphisms of $\mbb C^2$: Currents, equilibrium measure and hyperbolicity}, Invent. Math. {\bf 103} (1991), pp. 69--99

\bibitem{BS2} E. Bedford, J. Smillie:
\emph{Polynomial diffeomorphisms of $\mbb C^2$ -- II: Stable manifolds and recurrence}, J. Amer. Math. Soc. {\bf 4} (1991), pp. 657--679

\bibitem{BS3} E. Bedford, J. Smillie:
\emph{Polynomial diffeomorphisms of $\mbb C^2$ -- III: Ergodicity, exponents and entropy of the equilibrium measure}, Math. Ann. {\bf 294} (1992), pp. 395--420

\bibitem{CF} D. Coman, J. E. Fornaess:
\emph{Green's functions for irregular quadratic polynomial automorphisms of $\mbb C^3$}, Michigan Math. J. {\bf 46} (1999), no. 3, pp. 419--459


\bibitem{DS} T. C. Dinh, N. Sibony:
\emph{Rigidity of Julia sets for H\'{e}non type maps}, J. Mod. Dyn. {\bf 8} (2014), no. 3-4, pp. 499--548



\bibitem{FM} S. Friedland, J. Milnor:
\textit{Dynamical properties of plane polynomial automorphisms,}
Ergodic Theory Dynam. Systems {\bf 9} (1989), no. 1, 67--99 

\bibitem{FS} J. E. Forn{\ae}ss, N. Sibony:
\emph{Complex H\'{e}non mappings in $\mbb C^2$ and Fatou--Bieberbach domains}, Duke Math. J. {\bf 65} (1992), pp. 345--380

\bibitem{FW} J. E. Forn{\ae}ss, H. Wu:
\emph{Classification of degree $2$ polynomial automorphisms of $\mbb C^3$}, Publ. Mat. {\bf 42} (1998), pp. 195--210

\bibitem{FWe} J. E. Forn{\ae}ss, B. Weickert:
\emph{Random iteration in $\mathbb{P}^k$}, Ergodic Theory Dynam. Systems {\bf 20} (2000), no. 4, pp. 1091--1109




\emph{Dynamics of polynomial skew products on $\mathbb{C}^2$}, Math. Ann. {\bf 314} (1999), 403--447

\bibitem{J} M. Jonsson:
\emph{Ergodic properties of fibered rational maps}, Ark. Mat. {\bf 38} (2000), no. 2, pp. 281--317

\bibitem{K}  Y. Kifer: 
\emph {Ergodic theory of random transformations}, Progress in Probability and Statistics {\bf 10},
Birkh$\ddot{a}$user Boston, Inc., Boston, MA (1986), x+210 pp 

\bibitem{YK}  Y. Kifer, Y. Yomdin: 
\emph {Volume growth and topological entropy for random transformations}, Lecture Notes in Math. {\bf 1342}, Springer, Berlin (1988)  

\bibitem{MNTU} S. Morosawa, Y. Nishimura, M. Taniguchi, T. Ueda:
\emph{Holomorphic dynamics}, Translated from the 1995 Japanese original and revised by the authors. Cambridge Studies in
Advanced Mathematics, {\bf 66}, Cambridge University Press, Cambridge, (2000).

\bibitem{PV} R. Pal, K. Verma:
\emph{Dynamical properties of families of holomorphic mappings.}, Conform. Geom. Dyn. {\bf 19} (2015), pp. 323--350

\bibitem{P} H. Peters:
\emph{Non-autonomous dynamics in $\mbb P^k$}, Ergodic Theory Dynam. Systems {\bf 25} (2005), no. 4, pp. 1295--1304


\bibitem{S} N. Sibony:
\emph{Dynamique des applications rationnelles de $\mbb P^k$}, Panoramas et Synth\'{e}ses, {\bf 8} (1999), pp. 97--185


\bibitem{Sm} J. Smillie:
\emph{The entropy of polynomial diffeomorphism of $\mathbb{C}^2$}, Ergodic Theory and Dynamical System, {\bf 10} (1990), pp. 823--827




\bibitem{W}  P. Walters:
\emph {An introduction to ergodic theory},  Graduate Texts in Mathematics, {\bf 79} Springer-Verlag, New York-Berlin, 1982 

\bibitem{Y}  Y. Yomdin: 
\emph {Volume growth and entropy}, Israel J. Math. {\bf 57} (1987), no. 3, 285–-300 

\end{thebibliography}
\end{document}